\documentclass[10pt, one side,emlines]{amsart}
\usepackage{amssymb,latexsym,xy,eucal,mathrsfs}
\textwidth=15.2cm \textheight=24cm \theoremstyle{plain}
\newtheorem{lem}{Lemma}[section]

\newtheorem{thm}[lem]{Theorem}

\theoremstyle{definition}

\newtheorem{obs}[lem]{Observation}
\newtheorem{defn}[lem]{Definition}

\thispagestyle{empty} \voffset -55truept \hoffset -15truept

\begin{document}

\baselineskip 14truept
\title{Connectivity and edge-bipancyclicity of Hamming shell.}

\author{S. A. Mane and B. N. Waphare }

\dedicatory{Center for Advanced Studies in Mathematics,
	Department of Mathematics,\\ Savitribai Phule Pune University, Pune-411007, India.\\
manesmruti@yahoo.com : waphare@yahoo.com \\}

  \maketitle

 \begin{abstract} An
 Any graph obtained by deleting a Hamming code of length $n= 2^r - 1$ from a $n-$cube $Q_n$ is called as a Hamming shell.
 It is well known that a Hamming shell is vertex-transitive, edge-transitive, distance preserving \cite{bd,d1,d2,d3}. Moreover, it is Hamiltonian\cite{gs} and connected\cite{ddgz} . In this paper, we prove that a Hamming shell is edge-bipancyclic and $(n-1)-$connected.

 \end{abstract}

%\maketitle

\noindent {\bf Keywords :}  Perfect independent domination(perfect code),
distant faulty vertices, connectivity, edge-bipancyclicity,
Hypercubes.

\section{Introduction} The hypercube $Q_n$ of dimension $n$ is one of the most versatile and powerful interconnection networks. The graph of $Q_n$ has many excellent properties such as recursive structure, regularity, symmetry, small diameter, low degree etc.

 A binary linear code of length $n = 2^r - 1( r \geq 2)$, with
parity-check matrix $H$ whose columns consist of all the nonzero vectors of $F^r_2$
is called a binary Hamming code of length $2^r - 1$. Binary Hamming codes are perfect codes (perfect independent dominating sets). Any graph obtained by deleting a Hamming code of length $n= 2^r - 1$ from a $n-$cube $Q_n$ is called as a Hamming shell.

  The study of the effect of removal of a dominating set on a network is interesting in the
 sense that the damage that the network experiences is more when a core of the network
 is removed.
 In interconnection networks, it means that when a dominating set is removed, the network
 suffers a heavy set back when the remaining part of the network gets scattered.  This gives
 the motivation to study the vulnerability of a network when dominating sets are removed.
 For  example,  in  a  network  with  a  node  having  connections  with  all  other  nodes  which
 are independent, the removal of this master node will result in complete chaos.  On the
 other hand,  when every node has connection with every other node in a network,  then
 the removal of any number of vertices will not affect the cohesiveness of the remaining
 part.  But, such a network is costly.  We have to examine various
 networks  which  are  economic  and  at  the  same  time  have  a  high  ability  to  withstand
 attacks  on  dominating  sets.   
 
 Connectivity is often
 used as a measure of system reliability and fault tolerance. 
 
 Cycles the  most  fundamental  classes  of  network
 topologies for parallel and distributed computing. Cycles
 are  suitable  for  designing  simple  algorithms  with  low  communication costs. Numerous efficient parallel
 algorithms  designed  on  cycles  for  solving  various  algebraic
 problems, graph  problems,  and  some  parallel  applications,
 such  as  those  in  image  and  signal  processing. These algorithms can also be used as control/data
 flow structures for distributed computing in arbitrary networks
 if  these  networks  can  embed  cycles  so  that  the  algorithms
 designed on cycles can be executed on the embedded cycles.
 In addition, if cycles of various lengths can be embedded, the
 number of simulated processors can be adjusted to increase the
 elasticity  of demand. As a result, the problem of embedding
 cycles of various lengths into a topology is crucial for network
 simulation. In this paper, we tried to  measure
 the fault tolerant bipancyclicity and connectivity  of a hypercube  when its dominating sets are under attack.
 
 The Dejter graph is obtained by deleting a copy of the Hamming code of length $7$ from the binary $7-$cube.
 The Dejter graph, and by extension any graph obtained by deleting a Hamming code of length $2^r - 1$ from a $(2^r - 1)-$cube is well known Hamming Shell which is vertex-transitive, edge-transitive, distance preserving \cite{bd,d1,d2,d3}. Also, Gregor and Skrekovski\cite{gs} proved that Hamming Shell is Hamiltonian. Moreover, Duckworth et al.\cite{ddgz} proved a lemma, that Hamming Shell is connected. Thus, a Hamming shell preserves many useful properties of hypercube.
 
 Inspiring from above applications and results, in this paper, we prove that a Hamming shell preserves two more useful properties of hypercube those are edge-bipancyclicity and $(n-1)-$connectivity.

Weichsel\cite{w} considered perfect dominating sets in hypercubes
as a way of generating distance preserving regular subgraphs. In this paper, we prove the existence of perfect dominating set $D$ of $Q_n$($n \geq 3$) such that $Q_n - D$ is
$(n-1)-$regular, $(n-1)-$connected and bipancyclic.

 For undefined terminology and
notations see West\cite{we} and Ling and Xing\cite{li}.

 \section {Preliminaries}
 
 An $n-$dimensional hypercube $Q_n$ can be represented as an undirected graph $Q_n = (V,E)$ such that $V$ consists of $2^n$
  nodes which are labeled as binary numbers of length $n$. $E$ is the set of edges that connect two nodes if and only if 
  they  differ in exactly one bit of their labels. 
The parity of a vertex in $Q_n$ is the parity of the number
of $1$s in its name, even or odd. As a number of even parity
vertices is same as a number of odd parity vertices in $Q_n$ and
hence it is balanced.
 $Q_n$ has many attractive properties, such as being bipartite, $n-$regular, $n-$connected.
 It is a bipancyclic graph in the sense that, for every even integer k, $4\leq k \leq V(Q_n)$, there exists a cycle $C_k$ of length $k$ in $Q_n$. Moreover, it is an edge-bipancyclic graph, as for any edge in $Q_n$, there is a cycle $C_k$ of every even length $k$, $4\leq k \leq V(Q_n)$, traversing through this edge.  
 
 $Q_{n+1}$ can be decomposed into two copies of $Q_{n}$ (denoted by $(Q_{n}, {0})$ and $(Q_{n}, {1}$) ) whose vertices are joined by $2^{n}$ edges of a perfect matching $R$. These edges are called cross edges. Let $V(Q_n) =\{v_i: 1 \leq i \leq 2^n \}$ and
 without loss $V(Q_{n+1}) = \{(v_i, 0), (v_i, 1): v_i \in V(Q_n), 1
 \leq i \leq 2^n\}$. So, we write $Q_{n+1} =
(Q_{n}, {0})\cup (Q_{n}, {1})\cup R$ where $ V(Q_{n}, {0})  =\{(v_i, 0): v_i \in V(Q_n), 1
 \leq i \leq 2^n\}$, $ V(Q_{n}, {1})  =\{(v_i, 1): v_i \in
 V(Q_n), 1 \leq i \leq 2^n\}$. Note
 that the end vertices of
 any edge in $R$ are called corresponding vertices of each
 other. 
  
  For $n = 2^m - 1$ and $m \geq 2$ if we denote by $k =
  2^{m+1} -1$ obviously $k = n + (n+1)$. We decompose $Q_k = Q_n
  \times Q_{n+1}$. Now for any $t \in V(Q_{n+1})$, we denote by
  $(Q_{n}, {t})$ the subgraph of $Q_k$ induced by the vertices whose
  last ${n+1}$ components form the tuple $t$ and $(D, t)$ denotes
  the subgraph of $(Q_{n}, {t})$. It is easy to observe that $(Q_{n}, {t})$ is isomorphic to $Q_n$.
  
   $F \subset V(Q_n)$ is said to be distant (strongly independent)
   set of vertices if $N[u]\cap N[v] = \phi$ for every $u, v \in F$.

\section {\bf construction of a perfect independent
dominating (PID) set of $Q_{2^{m+1} - 1}$ from PID set of
$Q_{2^{m} - 1}$, $m \geq 2$.}

 In this section, we prove a lemma which provides a technique of
constructing a PID set of $Q_{2^{m+1} -
1}$ from PID set of $Q_{2^{m} - 1}$, $m \geq 2$. We consider
$Q_n$ as a vector space over
$Z_2$. \\

 Whenever $n$ is of the form $n = 2^m -
1$($m \geq 2$), there exists a strongly independent subset of
$Q_n$ which is a dominating set and hence a perfect independent
dominating set. In fact, if $H$ is an $m \times (2^m - 1)$-matrix
obtained by taking all non-zero $m$-tuples of $Q_m$ as columns of
$H$ then the kernel of the linear transformation $H : Q_n
\longrightarrow Q_m$ is a perfect independent dominating set of
$Q_n$ (see \cite{h}). It is clear that any linear subspace which
is a perfect independent dominating set is a kernel of such type
of matrix (see \cite{d3}).\\

\begin{lem}\label{l9} If $D'$ is PID linear subspace of $Q_n$ $(n = 2^m - 1)$ then $D$ is PID set which is also a linear subspace of $Q_{n+n+1}$ where $D = \{ (x, y, j) :  x \in V(Q_{n}),  y\in ({D'+x}), \sum y_i =  j  \} $

\end{lem}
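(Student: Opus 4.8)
The plan is to verify the two defining features of a perfect independent dominating (PID) set directly from the description of $D$, after first confirming linearity. Throughout I write each vertex of $Q_{n+n+1}$ as a triple $(x,y,j)$ with $x,y \in V(Q_n)$ and $j \in \Z_2$, I use $+$ for coordinatewise addition over $\Z_2$, and $w(\cdot)$ for Hamming weight.

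First I would show that $D$ is a linear subspace. Given $(x_1,y_1,j_1),(x_2,y_2,j_2)\in D$, one has $y_1+x_1,\ y_2+x_2 \in D'$ and $j_1=\sum_i (y_1)_i,\ j_2=\sum_i (y_2)_i$. Since $D'$ is a subspace and the parity map $y\mapsto \sum_i y_i$ is $\Z_2$-linear, the sum $(x_1+x_2,\,y_1+y_2,\,j_1+j_2)$ again satisfies $(y_1+y_2)+(x_1+x_2)\in D'$ and $j_1+j_2=\sum_i (y_1+y_2)_i$, so it lies in $D$; and clearly $(0,0,0)\in D$. Hence $D$ is a subspace, and $|D| = 2^n|D'|$, since $x$ ranges over all $2^n$ vectors of $V(Q_n)$ while $y$ ranges over the coset $D'+x$.

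Next I would record the cardinality bookkeeping. Because $D'$ is a Hamming code of length $n=2^m-1$, $|D'|=2^{n-m}$, so $|D|=2^{2n-m}$. As $2n+2 = 2^{m+1}$, one checks that $|D|\cdot(2n+2) = 2^{2n-m}\cdot 2^{m+1} = 2^{2n+1} = |V(Q_{n+n+1})|$; that is, $D$ has exactly the cardinality that a $1$-perfect code of length $2n+1 = 2^{m+1}-1$ must have. Consequently it suffices to prove that the closed neighborhoods of the codewords are pairwise disjoint, i.e. $N[u]\cap N[v]=\phi$ for distinct $u,v\in D$: disjointness together with the exact count forces the radius-$1$ balls (each of size $2n+2$) to tile the cube, which simultaneously gives strong independence and domination, hence the PID property.

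For the linear code $D$, pairwise disjointness of closed neighborhoods is equivalent to minimum weight at least $3$, and this is where the real work lies. I would argue by cases on a nonzero $(x,y,j)\in D$, setting $z=y+x\in D'$, so that $w(z)\ge 3$ whenever $z\ne 0$ (the Hamming code $D'$ has minimum distance $3$). If $x=0$ then $y=z\in D'$ is nonzero and the weight is at least $w(y)\ge 3$. If $z=0$ then $y=x$ and the weight equals $2w(x)+(w(x)\bmod 2)\ge 3$ since $x\ne 0$. In the remaining case $x\ne 0\ne z$, subadditivity of Hamming weight gives $w(x)+w(y)\ge w(x+y)=w(z)\ge 3$. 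Thus every nonzero codeword has weight at least $3$, so $D$ has minimum distance $\ge 3$ and, by the counting above, is a PID subspace of $Q_{n+n+1}$. The main obstacle is precisely this last case analysis: one must rule out stray codewords of weight $1$ or $2$ while handling the parity coordinate $j$ and the coset shift $y=x+z$ at once, and the inequality $w(x)+w(y)\ge w(z)$ is exactly what prevents cancellation between $x$ and $z$ from manufacturing a low-weight word.
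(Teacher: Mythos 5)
Your proof is correct, but it takes a genuinely different route from the paper's. The paper proves the lemma in one stroke via a parity-check matrix: writing $D'=\ker H'$ (every linear PID set is the kernel of a matrix $H'$ whose columns are the nonzero $m$-tuples, as recalled at the start of Section 3), it forms
\[
H=\left[\begin{array}{ccc} H' & H' & 0\\ 0 & 1 & 1 \end{array}\right]
\]
and checks that $H(x,y,j)^{T}=0$ iff $x+y\in D'$ and $j=\sum y_i$, i.e.\ $\ker H=D$; since the columns of $H$ are exactly the nonzero $(m+1)$-tuples, $H$ is a Hamming-type parity-check matrix of length $2^{m+1}-1$, so linearity and the PID property of $D$ follow simultaneously from the cited fact that kernels of such matrices are PID sets. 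You instead prove perfection from first principles: linearity by direct verification, the count $|D|=2^{n}|D'|=2^{2n-m}$ so that the closed balls have total size $|D|(2n+2)=2^{2n+1}=|V(Q_{2n+1})|$, and minimum weight at least $3$ by the case analysis on $z=x+y\in D'$ (the cases $x=0$, $z=0$, and $x\neq 0\neq z$, the last via the triangle inequality $w(x)+w(y)\geq w(x+y)$), after which sphere-packing forces the balls to tile the cube, giving independence and perfect domination at once. What the paper's argument buys is brevity and the explicit identification of $D$ as the Hamming code of the next admissible length, which is what the subsequent Observation exploits; what yours buys is self-containedness: you never invoke the characterization of linear PID sets as kernels of such matrices, only that $D'$ is linear, has minimum distance $3$, and has the perfect cardinality, so your argument is more elementary and would apply verbatim to any linear code with those properties. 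Your case analysis is complete and correctly rules out the degenerate low-weight words, so there is no gap.
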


\begin{proof} Let
	
$$\left[\begin{array}{ccc}
H' & H'& 0\\
0 & 1 & 1
\end{array}\right ]$$

%\[
%H =\begin{pmat}[{||}]
%H' & H' & 0 \cr\-
%0 & 1 & 1 \cr
%\end{pmat}
%\]

Now $H(x,y,j)^T=0$ \\
iff $\sum y_i =  j $ and $H'x + H'y = 0$.\\
iff $\sum y_i =  j $ and $x + y \in D'$.\\
iff $\sum y_i +  j = 0 $ and $ x \in V(Q_{n})$ and  $y\in ({D'+x})$.
Thus for any $ x \in V(Q_{n})$ choose $y\in ({D'+x})$ and choose $j = \sum  y_i$ to form $D$. 

\end{proof}

\begin{obs}\label{ob1} For $m \geq 2$ denote by $n = 2^m - 1$ and $k = 2^{m+1} - 1 = n + n + 1$. Let $Q_k = Q_n
	\times Q_{n+1}$. By Lemma \ref{l9}, It is easy to observe that\\
	 $[a]$ for any linear PID set $D'_0$ of $Q_n$,
	 $D'_i = D'_0 + e_i$ is also a PID set for
	 every $i$  ($1 \leq i \leq n$)  and further $V(Q_n) =
	 \bigcup^n_{i=0} D'_i$, $D'_i \bigcap D'_j = \phi$ for $i \neq j$ ($0
	 \leq i, j \leq n$). For every $i$ ($0 \leq i \leq n$), $D'_i$ is balanced\\
 $[b]$ for odd parity vertex $t \in V(Q_{n+1})$, $D \bigcap V(Q_{n}, {t}) =
\phi$ and for even parity vertex $t \in V(Q_{n+1})$, $D \bigcap
V(Q_{n}, {t}) = (D'_i, t)$ ($0 \leq i \leq n$).\\

\end{obs}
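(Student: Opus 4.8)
The plan is to treat $D'_0$ as the kernel of the Hamming parity-check matrix $H$ (the $m\times n$ matrix whose columns are all the nonzero $m$-tuples) and to read off both assertions from elementary coset bookkeeping, together with the observation that coordinate-translations are automorphisms of $Q_n$. First I would prove $[a]$. Translation by a fixed vector $w$ on $V(Q_n)$ preserves Hamming distance, since $(x+w)+(y+w)=x+y$; hence it is an automorphism of $Q_n$ and carries PID sets to PID sets. Applying this with $w=e_i$ shows each $D'_i=D'_0+e_i$ is again a PID set. Next, since $HD'_0=0$, the coset $D'_i$ ($1\le i\le n$) consists of the vertices with syndrome $He_i$, the $i$-th column of $H$, while $D'_0$ is the syndrome-$0$ coset. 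Because the columns of $H$ run through all $2^m-1$ nonzero $m$-tuples, the syndromes $0,He_1,\dots,He_n$ exhaust $\mathbb{F}_2^m$, each value occurring once. Thus $D'_0,\dots,D'_n$ are exactly the $n+1=2^m$ distinct cosets of $D'_0$; distinct cosets are disjoint, giving $D'_i\cap D'_j=\phi$ for $i\neq j$, and together they cover everything, giving $V(Q_n)=\bigcup_{i=0}^n D'_i$.

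For the balance claim in $[a]$, I would write the parity of $x$ as the linear functional $\sum_\ell x_\ell$. Restricted to $D'_0=\ker H$ this functional is not identically $0$: the all-ones vector is not in the row space of $H$ (its weight $2^m-1$ is odd, whereas every nonzero word of the dual simplex code has even weight $2^{m-1}$), equivalently $D'_0$ contains odd-weight codewords such as those of weight $3$. A nontrivial linear functional splits a subspace in half, so $D'_0$ has equally many even- and odd-parity vertices and is balanced. Finally, translation by $e_i$ changes $\sum_\ell x_\ell$ by $1$ and hence interchanges the even and odd classes, so it sends the balanced set $D'_0$ to the balanced set $D'_i$.

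For $[b]$ I would use the description of $D$ from Lemma \ref{l9}, writing a vertex of $V(Q_n,t)$ as $(x,y,j)$ with fixed last block $t=(y,j)\in V(Q_{n+1})$, whose parity is $\sum_\ell y_\ell + j$. Membership $(x,y,j)\in D$ forces $\sum_\ell y_\ell = j$, that is $\sum_\ell y_\ell + j = 0$, so every vertex of $D$ has even-parity last block. Hence for odd-parity $t$ one gets $D\cap V(Q_n,t)=\phi$. For even-parity $t=(y,j)$ the remaining constraint $y\in D'_0+x$ is equivalent to $x\in D'_0+y$, and by $[a]$ the coset $D'_0+y$ equals the unique $D'_i$ containing $y$; therefore $D\cap V(Q_n,t)=\{(x,y,j):x\in D'_i\}=(D'_i,t)$ for that $i$ with $0\le i\le n$.

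The only step that is more than routine bookkeeping is the balance claim, where the point is to certify that the parity functional is nontrivial on the Hamming code; I expect this to be the main (though mild) obstacle, and it is settled by comparing the even weight $2^{m-1}$ of the nonzero simplex-code words with the odd length $n=2^m-1$.
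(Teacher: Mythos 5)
Your proof is correct and is essentially the paper's own (unwritten) argument: the paper asserts the Observation outright as an easy consequence of Lemma \ref{l9}, and your syndrome/coset bookkeeping (the syndromes $0, He_1,\dots,He_n$ exhaust $F_2^m$, so the $D'_i$ are exactly the $2^m$ cosets), the translation-automorphism argument for the PID property, and the reduction of $[b]$ to the defining condition $\sum y_i + j = 0$ with $y \in D'_0 + x \Leftrightarrow x \in D'_0 + y$ constitute precisely the intended derivation. The one detail the paper never spells out anywhere, balancedness of each $D'_i$, you settle correctly by certifying the parity functional nontrivial on $\ker H$: the nonzero words of the dual (simplex) code have even weight $2^{m-1}$ for $m \geq 2$, while the all-ones vector has odd weight $2^m - 1$, so it lies outside the row space of $H$.
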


\section {\bf Edge-bipancyclicity of hypercubes with faulty vertices in PID set.}

To prove our main result of this section we need
following lemmas. 

\begin{lem}[\cite{xdx}\label{lm1}] Any two edges in $Q_n$ ($n \geq 2$) are included in a Hamiltonian cycle.\end{lem}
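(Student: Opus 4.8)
The plan is to argue by induction on $n$, combining the standard decomposition of the cube along one coordinate direction with the classical Hamiltonian laceability of $Q_n$, i.e. the existence of a Hamiltonian path between any two vertices of opposite parity. For the base case $n=2$ the cube $Q_2=C_4$ has exactly four edges and a single Hamiltonian cycle, which contains all of them, so any two of its edges lie on a common Hamiltonian cycle; if it proves convenient I would also verify $n=3$ by hand to leave more room in the inductive step. In the inductive step I assume the statement for $Q_n$ (with $n\geq 2$) and prove it for $Q_{n+1}$.

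The key preliminary maneuver is to choose the splitting direction well. Given two edges $e_1,e_2$ of $Q_{n+1}$, each is parallel to exactly one of the $n+1$ coordinate directions; since $n+1\geq 3$ I can select a direction $d$ different from the directions of both $e_1$ and $e_2$ and write $Q_{n+1}=(Q_n,0)\cup(Q_n,1)\cup R$ along $d$, where $R$ is the perfect matching of cross edges. With this choice neither $e_1$ nor $e_2$ is a cross edge: each lies entirely inside one of the two copies. This removes at a stroke all the awkward subcases in which a prescribed edge belongs to $R$. In \textbf{Case 1}, both edges lie in the same copy, say $(Q_n,0)$. By the inductive hypothesis there is a Hamiltonian cycle $C^0$ of $(Q_n,0)$ through $e_1$ and $e_2$; since $C^0$ has $2^n\geq 4$ edges I may pick an edge $f=(u,0)(v,0)$ of $C^0$ distinct from $e_1$ and $e_2$. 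As $u,v$ are adjacent in $Q_n$ they have opposite parity, so by laceability $(Q_n,1)$ has a Hamiltonian path $P$ from $(u,1)$ to $(v,1)$. Deleting $f$ from $C^0$ and inserting the cross edges $(u,0)(u,1)$ and $(v,0)(v,1)$ together with $P$ produces a Hamiltonian cycle of $Q_{n+1}$ still containing $e_1$ and $e_2$.

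The remaining \textbf{Case 2}, in which $e_1\in(Q_n,0)$ and $e_2\in(Q_n,1)$, is where the main difficulty lies. Here I would again take a Hamiltonian cycle $C^0$ of $(Q_n,0)$ through $e_1$ and a spare edge $f=(u,0)(v,0)$ of $C^0$ with $f\neq e_1$, and then splice across $R$ exactly as before; but now the path I insert into $(Q_n,1)$ must not merely join the opposite-parity pair $(u,1),(v,1)$, it must also pass through the prescribed edge $e_2$. Plain Hamiltonian laceability does not supply this, so the real obstacle is establishing a strengthened statement: in $Q_n$ (for $n\geq 3$), given any edge $e$ and any two vertices of opposite parity, there is a Hamiltonian path between them that contains $e$, apart from a few obvious degenerate configurations. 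My plan is to prove this strengthened laceability property and the two-edge cycle property simultaneously by the same induction, so that each feeds the other in the splicing step. The freedom to choose the spare edge $f$ of $C^0$ — equivalently, to let the endpoints $(u,1),(v,1)$ range over adjacent pairs of $(Q_n,1)$ — is what gives enough flexibility to avoid the degenerate cases and to meet the hypotheses of the strengthened claim, and managing that bookkeeping is the part of the argument I expect to require the most care.
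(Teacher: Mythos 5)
You should first note that the paper offers no proof of this lemma at all: it is imported verbatim from \cite{xdx}, so your argument has to be judged on its own rather than against a proof in the text. Judged on its own, the skeleton is sound and the first half is complete: choosing the splitting direction $d$ to avoid the directions of both $e_1$ and $e_2$ (possible since $n+1\geq 3$) is a good move, and your Case 1 --- inductive cycle $C^0$ through $e_1,e_2$ in $(Q_n,0)$, a spare edge $f=(u,0)(v,0)$ of $C^0$, and a Hamiltonian path of $(Q_n,1)$ from $(u,1)$ to $(v,1)$ spliced across the matching --- is correct, since Hamiltonian laceability of $Q_n$ is classical (the paper itself invokes it).

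The genuine gap is Case 2. You reduce it to a strengthened laceability statement (a Hamiltonian path with prescribed opposite-parity endpoints passing through a prescribed edge), but you never prove that statement; you only announce that it will be handled by a simultaneous induction whose ``degenerate configurations'' and ``bookkeeping'' are left open. That auxiliary claim is not routine: it genuinely fails when the prescribed edge joins the two prescribed endpoints (a path from $(u,1)$ to $(v,1)$ containing the edge $(u,1)(v,1)$ would close into a cycle), so the simultaneous induction would have to formulate and carry exceptional cases, and the claim is of essentially the same depth as the lemma you are trying to prove. The point you missed is that Case 2 needs no new lemma: apply the inductive hypothesis in \emph{both} copies. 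Choose any edge $f'=(u,0)(v,0)$ of $(Q_n,0)$ with $f'\neq e_1$ and whose correspondent $f=(u,1)(v,1)$ differs from $e_2$ (possible, as $Q_n$ has more than two edges); by induction take a Hamiltonian cycle $C^0$ of $(Q_n,0)$ through $e_1$ and $f'$, and a Hamiltonian cycle $C^1$ of $(Q_n,1)$ through $e_2$ and $f$; then $(C^0-f')\cup(C^1-f)$ together with the cross edges $(u,0)(u,1)$ and $(v,0)(v,1)$ is a Hamiltonian cycle of $Q_{n+1}$ through $e_1$ and $e_2$. With that replacement (which also covers Case 1: take $C^0$ through $e_1,e_2$, pick $f'$ on $C^0$ distinct from both, and get $C^1$ through $f$ by applying the hypothesis to $f$ and any second edge), your induction closes using nothing beyond the inductive hypothesis itself.
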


\begin{lem}\label{lm2}For any PID set $D$ of
$Q_3$, $Q_3 - D$ is Hamiltonian.
\end{lem}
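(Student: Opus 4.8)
The plan is to exploit the perfect-code structure of $D$ directly, rather than to hunt for an explicit cycle. First I would record the two numerical facts that pin down $Q_3 - D$: since $Q_3$ is $3$-regular with $|V(Q_3)| = 8$ and each closed neighborhood $N[d]$ has exactly $4$ vertices, the requirement that the closed neighborhoods of $D$ partition $V(Q_3)$ (that is, $D$ is distant and dominating) forces $|D| = 8/4 = 2$, so $Q_3 - D$ has exactly $6$ vertices.

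Second, I would argue that $Q_3 - D$ is $2$-regular. Because $D$ is a perfect independent dominating set, every vertex $u \in V(Q_3) \setminus D$ lies in exactly one closed neighborhood $N[d]$ with $d \in D$, and since $u \notin D$ this means $u$ is adjacent to exactly one vertex of $D$. Deleting $D$ therefore removes precisely one edge at each surviving vertex, dropping its degree from $3$ to $2$. Hence $Q_3 - D$ is a $2$-regular graph on $6$ vertices, i.e.\ a disjoint union of cycles whose lengths sum to $6$.

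Then the conclusion is forced by bipartiteness. As $Q_3$ is bipartite, so is its induced subgraph $Q_3 - D$, which therefore contains no odd cycle and in particular no triangle. The only ways to write $6$ as a sum of (simple) cycle lengths are $6$, $3+3$, and $4+2$; the decomposition $4+2$ is impossible since a simple graph has no $2$-cycle, and $3+3$ is excluded by triangle-freeness. This leaves a single $6$-cycle, so $Q_3 - D$ is itself a Hamiltonian cycle $C_6$, which proves the lemma.

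The argument is entirely structural, so I expect essentially no computational obstacle; the only point that needs care is justifying that deletion lowers each surviving degree by \emph{exactly} one, which is precisely where the perfectness of $D$ (and not merely its independence or domination) is used. As a sanity check one may instead invoke vertex-transitivity of $Q_3$ to reduce to the standard code $D = \{000, 111\}$ and exhibit the cycle $100, 110, 010, 011, 001, 101, 100$ explicitly; this confirms the general argument but is less illuminating.
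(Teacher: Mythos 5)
Your proof is correct, but it takes a genuinely different route from the paper's. The paper proves this lemma purely by inspection: it enumerates the perfect independent dominating sets of $Q_3$ (there are only four, namely $\{000,111\}$, $\{100,011\}$, $\{010,101\}$, $\{001,110\}$) and displays in Figure~$1$ the four complements $G_1,\dots,G_4$, each visibly a $6$-cycle. You instead argue structurally: the partition of $V(Q_3)$ into closed neighborhoods forces $|D|=2$; perfectness plus independence means each surviving vertex loses exactly one neighbor, so $Q_3-D$ is $2$-regular on $6$ vertices; and bipartiteness rules out the $3+3$ cycle decomposition, leaving a single $C_6$. Both arguments are sound, and your degree-counting step is exactly where the hypothesis ``perfect'' (not merely independent dominating) enters, as you note. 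What your approach buys is twofold: it avoids any enumeration of the PID sets, and it isolates the general mechanism --- removal of a PID set from an $n$-regular graph leaves an $(n-1)$-regular graph --- which is precisely the fact the paper invokes later (in Theorem~\ref{tm4}) when it asserts $(n-1)$-regularity of the Hamming shell. Moreover, your argument delivers the slightly stronger conclusion that $Q_3 - D$ \emph{is} a $6$-cycle, not merely that it contains one; the paper tacitly relies on this stronger statement in Part~I of the proof of Theorem~\ref{tm2}, where $G_j \cong Q_3 - D'_i$ is treated as being a Hamiltonian cycle. What the paper's approach buys is immediacy: for a single small case, a picture settles the matter with no argument needed.
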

\begin{proof} see Figure $1$.~\\
\begin{center}
%TeXCAD Picture [Fig-115.pic]. Options:
%\grade{\on}
%\emlines{\off}
%\epic{\off}
%\beziermacro{\on}
%\reduce{\on}
%\snapping{\off}
%\pvinsert{% Your \input, \def, etc. here}
%\quality{8.000}
%\graddiff{0.005}
%\snapasp{1}
%\zoom{4.0000}
\unitlength 1mm % = 2.845pt
\linethickness{0.4pt}
\ifx\plotpoint\undefined\newsavebox{\plotpoint}\fi % GNUPLOT compatibility
\begin{picture}(60.25,68.25)(0,0)
\put(10,26){\circle{1}} \put(10,54.75){\circle{1}}
\put(43.5,26){\circle{1}} \put(43.5,54.75){\circle{1}}
\put(10,39){\circle{1}} \put(10,67.75){\circle{1}}
\put(43.5,39){\circle{1}} \put(43.5,67.75){\circle{1}}
\put(4.75,15.75){\circle{1}} \put(4.75,44.5){\circle{1}}
\put(38.25,15.75){\circle{1}} \put(38.25,44.5){\circle{1}}
\put(4.75,28.75){\circle{1}} \put(4.75,57.5){\circle{1}}
\put(38.25,28.75){\circle{1}} \put(38.25,57.5){\circle{1}}
\put(26.25,26){\circle{1}} \put(26.25,54.75){\circle{1}}
\put(59.75,26){\circle{1}} \put(59.75,54.75){\circle{1}}
\put(26.25,39){\circle{1}} \put(26.25,67.75){\circle{1}}
\put(59.75,39){\circle{1}} \put(59.75,67.75){\circle{1}}
\put(21,15.75){\circle{1}} \put(21,44.5){\circle{1}}
\put(54.5,15.75){\circle{1}} \put(54.5,44.5){\circle{1}}
\put(21,28.75){\circle{1}} \put(21,57.5){\circle{1}}
\put(54.5,28.75){\circle{1}} \put(54.5,57.5){\circle{1}}
\put(9.75,55){\line(1,0){16.5}} \put(43.25,55){\line(1,0){16.5}}
\put(9.75,39.25){\line(1,0){16.5}}
\put(43.25,39.25){\line(1,0){16.5}} \put(4.5,16){\line(1,0){16.5}}
\put(38,16){\line(1,0){16.5}} \put(4.5,57.75){\line(1,0){16.5}}
\put(38,57.75){\line(1,0){16.5}} \put(9.75,26){\line(-1,-2){5.25}}
\put(9.75,54.75){\line(-1,-2){5.25}}
\put(43.25,39){\line(-1,-2){5.25}}
\put(43.25,67.75){\line(-1,-2){5.25}}
%\emline(59.75,26)(54.75,15.5)
\multiput(59.75,26)(-.033557047,-.070469799){149}{\line(0,-1){.070469799}}
%\end
%\emline(59.75,54.75)(54.75,44.25)
\multiput(59.75,54.75)(-.033557047,-.070469799){149}{\line(0,-1){.070469799}}
%\end
%\emline(26.25,39)(21.25,28.5)
\multiput(26.25,39)(-.033557047,-.070469799){149}{\line(0,-1){.070469799}}
%\end
%\emline(26.25,67.75)(21.25,57.25)
\multiput(26.25,67.75)(-.033557047,-.070469799){149}{\line(0,-1){.070469799}}
%\end
\put(10,39.25){\line(0,-1){13.25}}
\put(43.5,68){\line(0,-1){13.25}}
\put(4.75,57.75){\line(0,-1){13}} \put(38.25,29){\line(0,-1){13}}
\put(21,29){\line(0,-1){13}} \put(54.5,57.75){\line(0,-1){13}}
\put(26.25,68.25){\line(0,-1){13.5}}
\put(59.75,39.5){\line(0,-1){13.5}}
%\dashline{1}(5,58)(4.5,57.25)
\multiput(4.93,57.93)(-.03125,-.046875){8}{\line(0,-1){.046875}}
%\end
%\dashline{1}(9.75,68)(4.5,57.75)
\multiput(9.68,67.93)(-.033654,-.065705){12}{\line(0,-1){.065705}}
\multiput(8.872,66.353)(-.033654,-.065705){12}{\line(0,-1){.065705}}
\multiput(8.064,64.776)(-.033654,-.065705){12}{\line(0,-1){.065705}}
\multiput(7.257,63.199)(-.033654,-.065705){12}{\line(0,-1){.065705}}
\multiput(6.449,61.622)(-.033654,-.065705){12}{\line(0,-1){.065705}}
\multiput(5.641,60.045)(-.033654,-.065705){12}{\line(0,-1){.065705}}
\multiput(4.834,58.468)(-.033654,-.065705){12}{\line(0,-1){.065705}}
%\end
%\dashline{1}(4.5,57.75)(4.25,57.75)
\put(4.43,57.68){\line(-1,0){.125}}
%\end
%\dashline{1}(4.25,57.75)(4.25,57.75)
\put(4.18,57.68){\line(0,1){0}}
%\end
%\dashline{1}(4.25,57.75)(4.25,57.75)
\put(4.18,57.68){\line(0,1){0}}
%\end
%\dashline{1}(10,68.25)(26.25,68)
\put(9.93,68.18){\line(1,0){.9559}}
\put(11.841,68.15){\line(1,0){.9559}}
\put(13.753,68.121){\line(1,0){.9559}}
\put(15.665,68.091){\line(1,0){.9559}}
\put(17.577,68.062){\line(1,0){.9559}}
\put(19.489,68.033){\line(1,0){.9559}}
\put(21.4,68.003){\line(1,0){.9559}}
\put(23.312,67.974){\line(1,0){.9559}}
\put(25.224,67.944){\line(1,0){.9559}}
%\end
%\dashline{1}(10,67.75)(10,54.75)
\put(9.93,67.68){\line(0,-1){.9286}}
\put(9.93,65.823){\line(0,-1){.9286}}
\put(9.93,63.965){\line(0,-1){.9286}}
\put(9.93,62.108){\line(0,-1){.9286}}
\put(9.93,60.251){\line(0,-1){.9286}}
\put(9.93,58.394){\line(0,-1){.9286}}
\put(9.93,56.537){\line(0,-1){.9286}}
%\end
%\dashline{1}(21,57.5)(21,45)
\put(20.93,57.43){\line(0,-1){.9615}}
\put(20.93,55.507){\line(0,-1){.9615}}
\put(20.93,53.584){\line(0,-1){.9615}}
\put(20.93,51.66){\line(0,-1){.9615}}
\put(20.93,49.737){\line(0,-1){.9615}}
\put(20.93,47.814){\line(0,-1){.9615}}
\put(20.93,45.891){\line(0,-1){.9615}}
%\end
%\dashline{1}(25.75,55)(21,44.5)
\multiput(25.68,54.93)(-.032986,-.072917){12}{\line(0,-1){.072917}}
\multiput(24.888,53.18)(-.032986,-.072917){12}{\line(0,-1){.072917}}
\multiput(24.096,51.43)(-.032986,-.072917){12}{\line(0,-1){.072917}}
\multiput(23.305,49.68)(-.032986,-.072917){12}{\line(0,-1){.072917}}
\multiput(22.513,47.93)(-.032986,-.072917){12}{\line(0,-1){.072917}}
\multiput(21.721,46.18)(-.032986,-.072917){12}{\line(0,-1){.072917}}
%\end
%\dashline{1}(4.75,44.75)(21,44.75)
\put(4.68,44.68){\line(1,0){.9559}}
\put(6.591,44.68){\line(1,0){.9559}}
\put(8.503,44.68){\line(1,0){.9559}}
\put(10.415,44.68){\line(1,0){.9559}}
\put(12.327,44.68){\line(1,0){.9559}}
\put(14.239,44.68){\line(1,0){.9559}}
\put(16.15,44.68){\line(1,0){.9559}}
\put(18.062,44.68){\line(1,0){.9559}}
\put(19.974,44.68){\line(1,0){.9559}}
%\end
%\dashline{1}(9.75,39.25)(4.5,28.75)
\multiput(9.68,39.18)(-.033654,-.067308){12}{\line(0,-1){.067308}}
\multiput(8.872,37.564)(-.033654,-.067308){12}{\line(0,-1){.067308}}
\multiput(8.064,35.949)(-.033654,-.067308){12}{\line(0,-1){.067308}}
\multiput(7.257,34.334)(-.033654,-.067308){12}{\line(0,-1){.067308}}
\multiput(6.449,32.718)(-.033654,-.067308){12}{\line(0,-1){.067308}}
\multiput(5.641,31.103)(-.033654,-.067308){12}{\line(0,-1){.067308}}
\multiput(4.834,29.487)(-.033654,-.067308){12}{\line(0,-1){.067308}}
%\end
%\dashline{1}(4.5,28.75)(4.5,15.75)
\put(4.43,28.68){\line(0,-1){.9286}}
\put(4.43,26.823){\line(0,-1){.9286}}
\put(4.43,24.965){\line(0,-1){.9286}}
\put(4.43,23.108){\line(0,-1){.9286}}
\put(4.43,21.251){\line(0,-1){.9286}}
\put(4.43,19.394){\line(0,-1){.9286}}
\put(4.43,17.537){\line(0,-1){.9286}}
%\end
%\dashline{1}(4.5,29.25)(20.75,28.75)
\put(4.43,29.18){\line(1,0){.9559}}
\put(6.341,29.121){\line(1,0){.9559}}
\put(8.253,29.062){\line(1,0){.9559}}
\put(10.165,29.003){\line(1,0){.9559}}
\put(12.077,28.944){\line(1,0){.9559}}
\put(13.989,28.886){\line(1,0){.9559}}
\put(15.9,28.827){\line(1,0){.9559}}
\put(17.812,28.768){\line(1,0){.9559}}
\put(19.724,28.709){\line(1,0){.9559}}
%\end
%\dashline{1}(26.25,39.5)(26,26.25)
\put(26.18,39.43){\line(0,-1){.9464}}
\put(26.144,37.537){\line(0,-1){.9464}}
\put(26.108,35.644){\line(0,-1){.9464}}
\put(26.073,33.751){\line(0,-1){.9464}}
\put(26.037,31.858){\line(0,-1){.9464}}
\put(26.001,29.965){\line(0,-1){.9464}}
\put(25.965,28.073){\line(0,-1){.9464}}
%\end
%\dashline{1}(26,26.25)(21,15.5)
\multiput(25.93,26.18)(-.032051,-.06891){12}{\line(0,-1){.06891}}
\multiput(25.16,24.526)(-.032051,-.06891){12}{\line(0,-1){.06891}}
\multiput(24.391,22.872)(-.032051,-.06891){12}{\line(0,-1){.06891}}
\multiput(23.622,21.218)(-.032051,-.06891){12}{\line(0,-1){.06891}}
\multiput(22.853,19.564)(-.032051,-.06891){12}{\line(0,-1){.06891}}
\multiput(22.084,17.91)(-.032051,-.06891){12}{\line(0,-1){.06891}}
\multiput(21.314,16.257)(-.032051,-.06891){12}{\line(0,-1){.06891}}
%\end
%\dashline{1}(9.75,26.25)(26,26)
\put(9.68,26.18){\line(1,0){.9559}}
\put(11.591,26.15){\line(1,0){.9559}}
\put(13.503,26.121){\line(1,0){.9559}}
\put(15.415,26.091){\line(1,0){.9559}}
\put(17.327,26.062){\line(1,0){.9559}}
\put(19.239,26.033){\line(1,0){.9559}}
\put(21.15,26.003){\line(1,0){.9559}}
\put(23.062,25.974){\line(1,0){.9559}}
\put(24.974,25.944){\line(1,0){.9559}}
%\end
%\dashline{1}(38,57.75)(38,44.5)
\put(37.93,57.68){\line(0,-1){.9464}}
\put(37.93,55.787){\line(0,-1){.9464}}
\put(37.93,53.894){\line(0,-1){.9464}}
\put(37.93,52.001){\line(0,-1){.9464}}
\put(37.93,50.108){\line(0,-1){.9464}}
\put(37.93,48.215){\line(0,-1){.9464}}
\put(37.93,46.323){\line(0,-1){.9464}}
%\end
%\dashline{1}(43.25,54.75)(38,44.75)
\multiput(43.18,54.68)(-.033654,-.064103){12}{\line(0,-1){.064103}}
\multiput(42.372,53.141)(-.033654,-.064103){12}{\line(0,-1){.064103}}
\multiput(41.564,51.603)(-.033654,-.064103){12}{\line(0,-1){.064103}}
\multiput(40.757,50.064)(-.033654,-.064103){12}{\line(0,-1){.064103}}
\multiput(39.949,48.526)(-.033654,-.064103){12}{\line(0,-1){.064103}}
\multiput(39.141,46.987)(-.033654,-.064103){12}{\line(0,-1){.064103}}
\multiput(38.334,45.449)(-.033654,-.064103){12}{\line(0,-1){.064103}}
%\end
%\dashline{1}(38,44.75)(54.25,45)
\put(37.93,44.68){\line(1,0){.9559}}
\put(39.841,44.709){\line(1,0){.9559}}
\put(41.753,44.739){\line(1,0){.9559}}
\put(43.665,44.768){\line(1,0){.9559}}
\put(45.577,44.797){\line(1,0){.9559}}
\put(47.489,44.827){\line(1,0){.9559}}
\put(49.4,44.856){\line(1,0){.9559}}
\put(51.312,44.886){\line(1,0){.9559}}
\put(53.224,44.915){\line(1,0){.9559}}
%\end
%\dashline{1}(59.5,68.25)(54.25,57.5)
\multiput(59.43,68.18)(-.033654,-.06891){12}{\line(0,-1){.06891}}
\multiput(58.622,66.526)(-.033654,-.06891){12}{\line(0,-1){.06891}}
\multiput(57.814,64.872)(-.033654,-.06891){12}{\line(0,-1){.06891}}
\multiput(57.007,63.218)(-.033654,-.06891){12}{\line(0,-1){.06891}}
\multiput(56.199,61.564)(-.033654,-.06891){12}{\line(0,-1){.06891}}
\multiput(55.391,59.91)(-.033654,-.06891){12}{\line(0,-1){.06891}}
\multiput(54.584,58.257)(-.033654,-.06891){12}{\line(0,-1){.06891}}
%\end
%\dashline{1}(59.75,68)(59.75,55)
\put(59.68,67.93){\line(0,-1){.9286}}
\put(59.68,66.073){\line(0,-1){.9286}}
\put(59.68,64.215){\line(0,-1){.9286}}
\put(59.68,62.358){\line(0,-1){.9286}}
\put(59.68,60.501){\line(0,-1){.9286}}
\put(59.68,58.644){\line(0,-1){.9286}}
\put(59.68,56.787){\line(0,-1){.9286}}
%\end
%\dashline{1}(43.75,68.25)(59.75,68.25)
\put(43.68,68.18){\line(1,0){.9412}}
\put(45.562,68.18){\line(1,0){.9412}}
\put(47.444,68.18){\line(1,0){.9412}}
\put(49.327,68.18){\line(1,0){.9412}}
\put(51.209,68.18){\line(1,0){.9412}}
\put(53.091,68.18){\line(1,0){.9412}}
\put(54.974,68.18){\line(1,0){.9412}}
\put(56.856,68.18){\line(1,0){.9412}}
\put(58.739,68.18){\line(1,0){.9412}}
%\end
%\dashline{1}(43.25,39.25)(43.5,26)
\put(43.18,39.18){\line(0,-1){.9464}}
\put(43.215,37.287){\line(0,-1){.9464}}
\put(43.251,35.394){\line(0,-1){.9464}}
\put(43.287,33.501){\line(0,-1){.9464}}
\put(43.323,31.608){\line(0,-1){.9464}}
\put(43.358,29.715){\line(0,-1){.9464}}
\put(43.394,27.823){\line(0,-1){.9464}}
%\end
%\dashline{1}(43.5,26.25)(38.25,15.75)
\multiput(43.43,26.18)(-.033654,-.067308){12}{\line(0,-1){.067308}}
\multiput(42.622,24.564)(-.033654,-.067308){12}{\line(0,-1){.067308}}
\multiput(41.814,22.949)(-.033654,-.067308){12}{\line(0,-1){.067308}}
\multiput(41.007,21.334)(-.033654,-.067308){12}{\line(0,-1){.067308}}
\multiput(40.199,19.718)(-.033654,-.067308){12}{\line(0,-1){.067308}}
\multiput(39.391,18.103)(-.033654,-.067308){12}{\line(0,-1){.067308}}
\multiput(38.584,16.487)(-.033654,-.067308){12}{\line(0,-1){.067308}}
%\end
%\dashline{1}(43.75,26)(59.75,26.25)
\put(43.68,25.93){\line(1,0){.9412}}
\put(45.562,25.959){\line(1,0){.9412}}
\put(47.444,25.989){\line(1,0){.9412}}
\put(49.327,26.018){\line(1,0){.9412}}
\put(51.209,26.047){\line(1,0){.9412}}
\put(53.091,26.077){\line(1,0){.9412}}
\put(54.974,26.106){\line(1,0){.9412}}
\put(56.856,26.136){\line(1,0){.9412}}
\put(58.739,26.165){\line(1,0){.9412}}
%\end
%\dashline{1}(38.25,29)(54.5,29.25)
\put(38.18,28.93){\line(1,0){.9559}}
\put(40.091,28.959){\line(1,0){.9559}}
\put(42.003,28.989){\line(1,0){.9559}}
\put(43.915,29.018){\line(1,0){.9559}}
\put(45.827,29.047){\line(1,0){.9559}}
\put(47.739,29.077){\line(1,0){.9559}}
\put(49.65,29.106){\line(1,0){.9559}}
\put(51.562,29.136){\line(1,0){.9559}}
\put(53.474,29.165){\line(1,0){.9559}}
%\end
%\dashline{1}(59.25,39.5)(54,29)
\multiput(59.18,39.43)(-.033654,-.067308){12}{\line(0,-1){.067308}}
\multiput(58.372,37.814)(-.033654,-.067308){12}{\line(0,-1){.067308}}
\multiput(57.564,36.199)(-.033654,-.067308){12}{\line(0,-1){.067308}}
\multiput(56.757,34.584)(-.033654,-.067308){12}{\line(0,-1){.067308}}
\multiput(55.949,32.968)(-.033654,-.067308){12}{\line(0,-1){.067308}}
\multiput(55.141,31.353)(-.033654,-.067308){12}{\line(0,-1){.067308}}
\multiput(54.334,29.737)(-.033654,-.067308){12}{\line(0,-1){.067308}}
%\end
%\dashline{1}(54.25,29)(54.25,16)
\put(54.18,28.93){\line(0,-1){.9286}}
\put(54.18,27.073){\line(0,-1){.9286}}
\put(54.18,25.215){\line(0,-1){.9286}}
\put(54.18,23.358){\line(0,-1){.9286}}
\put(54.18,21.501){\line(0,-1){.9286}}
\put(54.18,19.644){\line(0,-1){.9286}}
\put(54.18,17.787){\line(0,-1){.9286}}
%\end
\put(29,6.25){\makebox(0,0)[cc]{Figure $1$}}
\put(16,63.25){\makebox(0,0)[cc]{$G_1$}}
\put(49.75,62.5){\makebox(0,0)[cc]{$G_2$}}
\put(15.75,33.5){\makebox(0,0)[cc]{$G_3$}}
\put(49.75,33.5){\makebox(0,0)[cc]{$G_4$}}
\end{picture}
\end{center}
\end{proof}

\begin{lem}\label{lm3} Consider the subgraphs $(Q_{3}, {0})$ and $(Q_{3}, {1})$ of $Q_{4}$. Let $G'$ be a cycle of
$(Q_{3}, {0})$ with $|V(G')| = 6$ and $R'=$ set of cross edges between $(Q_{3}, {1})$ and $G'$. Consider $G = (Q_{3}, {1}) \bigcup R'
\bigcup G'$. Then $G$ is an edge-bipancyclic graph. \end{lem}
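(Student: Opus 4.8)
The plan is to exploit that $G$ is a bipartite subgraph of $Q_4$ on $14$ vertices, so the only possible cycle lengths are the even numbers $k\in\{4,6,8,10,12,14\}$, and I must exhibit, through each edge, a cycle of each such length. I fix notation $G'=u_1u_2\cdots u_6u_1$, let $w_i$ be the partner of $u_i$ across the matching (so $R'=\{u_iw_i:1\le i\le 6\}$), and let $w_7,w_8$ be the two vertices of $(Q_3,1)$ with no partner on $G'$. The edges of $G$ fall into three classes: the interior edges of $(Q_3,1)$, the six cross edges $u_iw_i$, and the six arc edges of $G'$. Two reductions shrink the work. First, since the coordinate-automorphisms of $Q_3$ act on $Q_4$ preserving the splitting $(Q_3,0)\cup(Q_3,1)\cup R$ and are transitive on each isomorphism type of $6$-cycle (the omitted pair of vertices being antipodal or adjacent), it suffices to treat one representative $G'$ of each type. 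Second, the dihedral symmetries of $G'$ lift to automorphisms of $G$, collapsing the three edge classes into only a handful of orbits (for the antipodal representative, into ``arc edge'', ``cross edge'', ``spoke edge of $(Q_3,1)$'', and ``hexagon edge of $(Q_3,1)$''), so only a few representative edges need be handled for each length.

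The engine for building cycles is to splice a path of $(Q_3,1)$ with an arc of $G'$ along two cross edges: if $P$ is a $w_a$--$w_b$ path in $(Q_3,1)$ and $A$ is a $u_a$--$u_b$ arc of $G'$, then $A\cup\{u_aw_a\}\cup P\cup\{w_bu_b\}$ is a cycle of length $|A|+|P|+2$. On a hexagon the two arcs between $u_a$ and $u_b$ have lengths $\{1,5\}$, $\{2,4\}$, or $\{3,3\}$, and the admissible $w_a$--$w_b$ path lengths in $(Q_3,1)\cong Q_3$ I read off from hypercube path properties: deleting the joining edge from the $C_4,C_6,C_8$ (and from a Hamiltonian cycle) guaranteed by edge-bipancyclicity gives, for adjacent $w_a,w_b$, paths of every odd length $1,3,5,7$, and between distance-$2$ vertices the even lengths $2,4,6$ are available as well. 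Choosing $|A|$ and $|P|$ suitably realises every target; for example $|A|=1$ with $|P|\in\{1,3,5,7\}$ already produces $k\in\{4,6,8,10\}$ through any cross edge or arc edge, with $G'$ itself supplying $k=6$ for an arc edge.

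For interior edges the short lengths $k=4,6,8$ come directly from the edge-bipancyclicity of $(Q_3,1)$. To reach the long lengths I close a (near-)spanning path of $(Q_3,1)$ through the prescribed edge with $G'$. Here Lemma~\ref{lm1} is exactly the right tool for $k=10$ and $k=14$: applied to the given interior edge $e$ and to the partner edge $w_aw_b$ of a chosen arc edge $u_au_b$, it yields a Hamiltonian cycle of $(Q_3,1)$ through both, and deleting $w_aw_b$ leaves the Hamiltonian $w_a$--$w_b$ path through $e$ that I splice with the complementary $G'$-arc (length $7+2+1=10$ or $7+2+5=14$); since a spoke edge $e$ is never a hexagon edge $w_aw_b$, the hypothesis $e\neq w_aw_b$ of this deletion is automatic.

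The genuine obstacle is $k=12$, and this is the step I would organise most carefully. Every way of assembling a $12$-cycle through an interior spoke edge forces a path with both endpoints prescribed and non-adjacent through a prescribed edge --- e.g. a Hamiltonian $w_a$--$w_b$ path of $(Q_3,1)$ through $e$ whose endpoints are the antipodal partners of a length-$3$ arc of $G'$, or a spanning $u_a$--$u_b$ path of the six-vertex graph $H_6=(Q_3,0)[V(G')]$. These are Hamiltonian-laceability-through-an-edge statements that do not follow formally from Lemma~\ref{lm1}; they do hold in $Q_3$, and I would establish exactly the instances needed by direct inspection (a sample such path is $011\,111\,101\,001\,000\,010\,110\,100$, which runs between the antipodal pair $011,100$ and uses the spoke edge $\{000,001\}$). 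The structure of $H_6$ also enters here and depends on the type of $G'$: in the antipodal case $H_6$ is exactly the hexagon, so its spanning paths are precisely $G'$ minus one edge, while in the adjacent case $H_6$ is the hexagon plus one chord and has a few more. Because the symmetry reduction leaves only a handful of edge-orbits and $H_6$ has only six vertices, verifying that the endpoint pairs dictated by the various edges do admit the required spanning and Hamiltonian paths is a short finite check, which I expect to be the bulk of the real argument.
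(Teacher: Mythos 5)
Your proposal is sound, and its engine (splicing a path of $(Q_{3},1)$ with an arc of $G'$ through two cross edges) is the same one the paper uses; but you and the paper part ways at the only hard point, namely the long cycles through interior edges. The paper's proof runs on a single counting observation that you never isolate: any two cycles of length at least $6$ in $Q_3$ share at least $4$ vertices, hence (each vertex having degree $3$) at least $2$ edges. Consequently, for an interior edge $e$, \emph{any} Hamiltonian cycle of $(Q_{3},1)$ through $e$ automatically carries an edge $f\neq e$ whose partner $f'$ lies on $G'$, giving lengths $10$ and $14$ by replacing $f$ with $f_1+f'+f_2$ resp. $f_1+(G'-f')+f_2$; and any $6$-cycle through $e$ carries such an edge $h\neq e$, giving length $12$ as $(C_6-h)+h_1+h_2+(G'-h')$. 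This is uniform over all $6$-cycles $G'$: no antipodal/adjacent dichotomy, no edge-orbit reduction, no laceability facts. You instead obtain $10$ and $14$ by forcing a partner edge onto the Hamiltonian cycle via Lemma \ref{lm1} --- perfectly valid, arguably cleaner --- but for $12$ you are driven to Hamiltonian-path-with-prescribed-ends-through-a-prescribed-edge statements that Lemma \ref{lm1} does not yield, and which you settle only by finite inspection over your edge orbits and the two types of $G'$; that check is genuine work, and you have only sampled it with one explicit path. The paper's observation makes exactly this work disappear: a $6$-cycle of $(Q_{3},1)$ through $e$ together with a shared partner edge is precisely the length-$5$ path between adjacent partners through $e$ that your second assembly pattern requires, and it exists for free. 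In short, your route buys a systematic, symmetry-driven enumeration that would certainly terminate (your orbit analysis and the sample laceable path are correct); the paper's route buys uniformity and a one-line dispatch of the length-$12$ case that your plan leaves as its bulk.
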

\begin{proof}
It is easy to observe that any two cycles of length greater equal
$6$ in $Q_3$ have at least $4$ vertices common, and since a degree
of each vertex is $3$ so at least two edges common.

For every edge $T \in E(G)$, we want a cycle of every even length
$l$($4 \leq l \leq |V(G)|$) passing through $T$.(see Figure $2$)

For edges $e', f',h'\in E(G')$ let us denote their corresponding
edges in $ E(Q_{3}, {1})$ by $e, f ,h$ respectively. Also, let us denote
$e_1, e_2, e_3,e_4 \in R'$ the edges out of which $e_1,e_2$ join
the end vertices of $e$ and $e'$,  $f_1,f_2$ join the end vertices
of $f$ and $f'$ and $h_1,h_2$ join the end vertices of $h$ and
$h'$\\
\textbf{Case} $1$: For an edge $e$ in $E(Q_{3}, {1})$. As $(Q_{3}, {1})$ is an
edge-bipancyclic graph therefore there exists a cycle of every
even length $l$ ($4 \leq l \leq |V(Q_3)|$) in $(Q_{3}, {1})$ hence in $G$
containing $e$. Let $C$ be a Hamiltonian cycle in $(Q_{3}, {1})$
containing an edge $e$. Now, other than $e$ there is at least one
edge say $f$ lies on cycle $C$ such that its corresponding edge
$f'$ is on cycle $G'$. Consider cycles $(C-f)+f_1+f_2+f'$ and
$(C-f)+f_1+f_2+(G'-f')$ which contain $e$ with their length $10$
and $14$ respectively. Lastly, let $C_6$ be a cycle of length $6$
in $(Q_{3}, {1})$ which contains an edge $e$ and let $h$ be any other
edge on this cycle which have its corresponding edge say $h'$ in
$G'$. Now we construct a cycle of length $12$ containing $e$ as
$(C_6-h)+h_1+h_2+(G'-h')$.\\
 \textbf{Case} $2$: Consider an edge
$e'$ in $E(G')$, let $f'\neq e'$ be an edge on $G'$( there are $5$
such edges). Consider $C_k$ be a cycle in $(Q_{3}, {1})$ of even length
$k$ ($4 \leq k \leq |V(Q_3)|$ containing $f$. Now we construct a
cycle containing $e'$ of even length $l = k + |V(G')|$ (as $4 \leq
k \leq |V(Q_3)|$ therefore $|V(G')| + 4 \leq l \leq |V(G')| +
|V(Q_3)|$) as $(G' - f') + f_1 + (C_k - f) + f_2$. For $l =
|V(G')| + 2$ consider a cycle $(G' - f') + f_1 + f + f_2$ and for
$l=4$ a cycle is $e'+e_1+e_2+e$ containing $e'$.\\
\textbf{Case} $3$: For an edge $f_1$ in $R'$. let one end vertex
of $f_1$ is common with an edge $f'$ in $G'$ then Consider $C_k$
be a cycle in $(Q_{3}, {1})$ of even length $k$ ($4 \leq k \leq |V(Q_3)|$
containing $f$. Now we construct a cycle containing $f_1$ of even
length $l = k + |V(G')|$ ( therefore $|V(G')| + 4 \leq l \leq
|V(G')| + |V(Q_3)|$) as $(G' - f') + f_1 + (C_k - f) + f_2$. For
$l = |V(G')| + 2$ consider a cycle $(G' - f') + f_1 + f + f_2$ and
for $l=4$ and $l=6$  cycles are $f'+f_1+f_2+f$ and $ f' + f_1 +
(C_4 - f) + f_2$ respectively, where $C_k$ is cycle of length
$4$~\\
\begin{center}
%TeXCAD Picture [Fig p5-1.pic]. Options:
%\grade{\on}
%\emlines{\off}
%\epic{\off}
%\beziermacro{\on}
%\reduce{\on}
%\snapping{\off}
%\pvinsert{% Your \input, \def, etc. here}
%\quality{8.000}
%\graddiff{0.005}
%\snapasp{1}
%\zoom{4.0000}
\unitlength 1mm % = 2.845pt
\linethickness{0.4pt}
\ifx\plotpoint\undefined\newsavebox{\plotpoint}\fi % GNUPLOT compatibility
\begin{picture}(86.618,83.75)(0,0)
\put(49.25,2.125){\makebox(0,0)[cc]{Figure $2$}}
\put(9.309,35.875){\circle*{1.118}}
\put(66.309,35.875){\circle*{1.118}}
\put(9.309,45.875){\circle*{1.118}}
\put(66.309,45.875){\circle*{1.118}}
\put(21.559,45.875){\circle*{1.118}}
\put(29.059,54.375){\circle*{1.118}}
\put(86.059,54.375){\circle*{1.118}}
\put(28.559,27.875){\circle*{1.118}}
\put(85.559,27.875){\circle*{1.118}}
\put(4.059,28.125){\circle*{1.118}}
\put(4.059,54.125){\circle*{1.118}}
\put(61.059,54.125){\circle*{1.118}}
\put(21.559,36.125){\circle*{1.118}}
\put(78.559,36.125){\circle*{1.118}}
\put(21.809,46.375){\line(1,0){44.5}}
\put(21.559,36.625){\line(1,0){44.75}}
\put(29.559,54.625){\line(1,0){31.25}}
\qbezier(4.059,54.375)(44.309,83.75)(85.559,54.625)
\qbezier(9.309,36.375)(45.559,7.25)(78.809,36.625)
\qbezier(3.559,28.375)(45.059,4.5)(85.559,28.125)
\put(9.309,45.875){\line(1,0){12.25}}
\put(21.559,45.625){\line(0,-1){9.5}}
\put(9.309,36.375){\line(1,0){12.5}}
\put(9.059,45.625){\line(0,-1){9.25}}
\put(3.809,54.625){\line(0,-1){26.25}}
\put(3.809,28.125){\line(1,0){24.5}}
\put(28.809,54.375){\line(0,-1){26.25}}
\put(3.809,54.875){\line(1,0){24.75}}
\put(66.059,46.125){\line(0,-1){9.5}}
\put(66.059,36.375){\line(1,0){12.25}}
\put(85.809,54.625){\line(0,-1){26.25}}
\put(61.059,54.375){\line(1,0){24.75}}
%\emline(4.059,54.375)(9.059,46.375)
\multiput(4.059,54.375)(.033557047,-.053691275){149}{\line(0,-1){.053691275}}
%\end
%\emline(8.809,36.375)(4.059,28.875)
\multiput(8.809,36.375)(-.033687943,-.053191489){141}{\line(0,-1){.053191489}}
%\end
%\emline(21.309,36.125)(28.559,28.375)
\multiput(21.309,36.125)(.03372093,-.036046512){215}{\line(0,-1){.036046512}}
%\end
%\emline(28.809,54.875)(21.309,46.375)
\multiput(28.809,54.875)(-.033632287,-.038116592){223}{\line(0,-1){.038116592}}
%\end
%\emline(60.809,54.125)(66.059,46.625)
\multiput(60.809,54.125)(.033653846,-.048076923){156}{\line(0,-1){.048076923}}
%\end
%\emline(78.559,36.125)(85.559,28.625)
\multiput(78.559,36.125)(.033653846,-.036057692){208}{\line(0,-1){.036057692}}
%\end
\put(15.809,41.375){\makebox(0,0)[cc]{$Q^1_3$}}
\put(73.059,42.375){\makebox(0,0)[cc]{$G'$}}
\put(22.5,40.625){\makebox(0,0)[cc]{$e$}}
\put(64.25,42.375){\makebox(0,0)[cc]{$e'$}}
\put(15.5,52.125){\makebox(0,0)[cc]{$f$}}
\put(72.75,51.875){\makebox(0,0)[cc]{$f'$}}
\put(43.75,43.375){\makebox(0,0)[cc]{$e_1$}}
\put(43,33.875){\makebox(0,0)[cc]{$e_2$}}
\end{picture}
\end{center}
\end{proof}

\textbf{Note}: In above lemma if we take $|V(G')|=4$ or
$|V(G')|=8$ we still get the same result by using the almost same
technique of proof.

\begin{lem}\label{lm4} Consider the subgraphs $(Q_{n}, {0})$ and $(Q_{n}, {1})$ of $Q_{n+1}$. Let $G'$ be a subgraph of
$(Q_{n}, {0})$ and $R'=$ set of cross edges between $(Q_{n}, {1})$ and $G'$. Consider $G =(Q_{n}, {1}) \bigcup R'
\bigcup G'$. Then $G$ is \\
$(a)$ a bipancyclic graph if $G'$ is a Hamiltonian graph for $n \geq 2$.\\
$(b)$ an edge-bipancyclic graph if $G'$ is a bipancyclic graph for $n
\geq 3$.\end{lem}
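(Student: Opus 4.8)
The plan is to handle the two parts by the same device: every long cycle of $G$ is assembled by \emph{splicing} a piece lying in the full copy $(Q_{n},{1})$ to a piece lying in $G'$ along a pair of corresponding edges, glued by their two cross edges in $R'$. Write $m=|V(G')|$ (even, with $4\le m\le 2^{n}$) and $N=|V(G)|=2^{n}+m$. For an edge $(u,0)(v,0)$ of $G'$ let $f=(u,1)(v,1)$ be its corresponding edge of $(Q_{n},{1})$ and $f_{1}=(u,0)(u,1)$, $f_{2}=(v,0)(v,1)$ the two cross edges joining their ends, exactly as in the proof of Lemma~\ref{lm3}.

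For part $(a)$, every even length $\ell$ with $4\le\ell\le 2^{n}$ is realized inside $(Q_{n},{1})\cong Q_{n}$ by bipancyclicity. For the longer lengths I fix a Hamiltonian cycle $C'$ of $G'$ and an edge $f'=(u,0)(v,0)$ on it. The cycle $(C'-f')+f_{2}+f+f_{1}$ has length $m+2$, and for each even $k$ with $4\le k\le 2^{n}$ the edge-bipancyclicity of $(Q_{n},{1})$ gives a cycle $C_{k}$ through $f$, whence $(C'-f')+f_{2}+(C_{k}-f)+f_{1}$ is a cycle of length $m+k$. As $k$ ranges over $\{4,\dots,2^{n}\}$ this produces every even length in $\{m+4,\dots,N\}$; together with $m+2$ and with $\{4,\dots,2^{n}\}$, and using $m\le 2^{n}$, the intervals overlap and cover all even $\ell\in[4,N]$, so $G$ is bipancyclic.

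For part $(b)$ I argue edge by edge, splitting $E(G)$ into the three families of Lemma~\ref{lm3}: edges of $(Q_{n},{1})$, cross edges of $R'$, and edges of $G'$. The short range $4\le\ell\le 2^{n}+2$ is uniform: if the prescribed edge lies in $(Q_{n},{1})$ use edge-bipancyclicity directly, while if it is an edge $f'$ of $G'$ or a cross edge one starts from the $4$-cycle on $\{(u,0),(v,0),(v,1),(u,1)\}$ and replaces $f$ by an odd-length path of $(Q_{n},{1})$ between $(u,1)$ and $(v,1)$, such paths of every odd length up to $2^{n}-1$ being supplied by edge-bipancyclicity of $(Q_{n},{1})$. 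For a prescribed edge $e$ of $(Q_{n},{1})$ and the long range $2^{n}+2\le\ell\le N$ I use Lemma~\ref{lm1}: pick a cycle $C'_{j}$ of $G'$ of length $j=\ell-2^{n}$ (it exists since $G'$ is bipancyclic) and an edge $f'$ on it with $f\ne e$; Lemma~\ref{lm1} gives a Hamiltonian cycle $C$ of $(Q_{n},{1})$ through both $e$ and $f$, and $(C-f)+f_{1}+(C'_{j}-f')+f_{2}$ is a cycle of length $\ell$ through $e$ (for $\ell=2^{n}+2$ use the single edge $f'$ in place of $C'_{j}-f'$).

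The hard part is the long range when the prescribed edge lies in $G'$ or $R'$: the cycle must pass through that prescribed element and still absorb $\ell-2^{n}$ vertices of $G'$, which forces a cycle of $G'$ of each even length through a prescribed edge, equivalently a subpath of $G'$ of each admissible odd length hinged at a prescribed vertex. Bipancyclicity gives cycles of each length but not through a prescribed edge, so this is the delicate point. I expect to resolve it by absorbing all of $(Q_{n},{1})$ through a Hamiltonian path between the two opposite-parity vertices where the cross edges attach, and hinging the $G'$-side at the prescribed edge via a bipancyclic cycle of $G'$, again freeing the gluing with Lemma~\ref{lm1} inside $(Q_{n},{1})$. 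Securing this extraction for every length is the crux; once it is in place the three edge families are exhausted and $G$ is edge-bipancyclic.
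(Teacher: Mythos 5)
Your part (a), your entire short range of part (b), and your long range of part (b) for a prescribed edge of $(Q_n,1)$ are correct, and they coincide with the paper's own constructions (the paper's Cases 1--3 use exactly your splicing device, with Lemma~\ref{lm1} freeing the choice of gluing edge). But the proposal is not a proof: the case you yourself single out as the crux --- a prescribed edge $e'\in E(G')$ or a cross edge, with lengths $\ell\ge 2^n+4$ --- is left as an expectation (``I expect to resolve it by\dots'', ``securing this extraction \dots is the crux''), and that case is precisely where the content of part (b) lies, since $G'$ is assumed only bipancyclic. Note also that your diagnosis of this case is off: it does \emph{not} force a cycle of $G'$ of each even length through the prescribed edge; evading that stronger requirement is exactly what makes the lemma provable under the stated hypothesis.

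The paper executes the idea you sketch as follows. Write $e'=(u,0)(v,0)$, $e=(u,1)(v,1)$, and let $e_1,e_2$ be the cross edges at $u$ and $v$. For even $k$, bipancyclicity of $G'$ gives a cycle $C'_k$ of length $k$; choose on it an edge $f'_k$ neither equal nor adjacent to $e'$, with corresponding edge $f_k$ and cross edges $e_3,e_4$. By Lemma~\ref{lm1} there is a Hamiltonian cycle $C$ of $(Q_n,1)$ through both $e$ and $f_k$; deleting $e$ and $f_k$ splits $C$ into two paths spanning $(Q_n,1)$ whose four ends are exactly the attachment points of $e_1,e_2,e_3,e_4$. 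Then $e'$, $e_1$, $e_2$, the two paths, $e_3$, $e_4$, and $C'_k-f'_k$ close up into a single cycle through $e'$ (and through both its cross edges, which settles the $R'$ case) of length $2^n+k+2$; with the single edge $f'_k$ in place of $C'_k-f'_k$ one gets $2^n+4$. This is what you would need to write down to finish. Two caveats, which in fact also affect the paper's own write-up of this step: the construction tacitly requires $C'_k$ to avoid the two endpoints of $e'$ (choosing $f'_k$ non-adjacent to $e'$ does not guarantee this), and since $k$ is capped at $|V(G')|-4$ it only reaches $\ell\le 2^n+|V(G')|-2$, so the Hamiltonian length through an edge of $G'$ is still unaccounted for. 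So the delicacy you sensed is real; both points disappear if one strengthens the hypothesis to $G'$ edge-bipancyclic, which is how the lemma is actually invoked in Theorem~\ref{tm2}.
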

\begin{proof}
For edges $e', f'_k\in E(G')$ let us denote their corresponding
edges in $ E(Q_{n}, {1})$ by $e$ and $f_k$ respectively. Also,  let us
denote $e_1, e_2, e_3,e_4 \in R'$ the edges of which $e_1,e_2$
join the end vertices of $e$ and $e'$, and $e_3,e_4$ join the end
vertices of $f_k$ and $f'_k$.~\\

  \textbf{$(a)$} We want to prove that $G = (Q_{n}, {1}) \bigcup R' \bigcup G'$
contains a cycle of every even length $l$ for $4 \leq l \leq
{|V(Q_n)| + |V(G')|}$. As $(Q_{n}, {1})$ is a bipancyclic graph therefore
there exists a cycle of every even length $l$ ($4 \leq l \leq
|V(Q_n)|$) in $G$.

Consider $C'$ a Hamiltonian cycle in $G'$ containing an edge say
$e'$. As $(Q_{n}, {1})$ is an edge-bipancyclic graph, let $C_k$ be
   a cycle in $(Q_{n}, {1})$ of even length $k$
   ($4 \leq k \leq |V(Q_n)|$ containing $e$. Now we construct a
   cycle of even length $l = k + |V(G')|$ ( therefore $|V(G')| + 4 \leq l \leq |V(G')| +
   |V(Q_n)|$) as $(C' - e') + e_1 + (C_k - e) + e_2$. For $l = |V(G')| + 2$
   consider $(C' - e') + e_1 + e + e_2$ .~\\

   \textbf{$(b)$}  For every edge $T \in E(G)$, we want a cycle of
every even length $l$($4 \leq l \leq |V(G)|$) passing through $T$.\\
\textbf{Case} $1$: Let $T = e \in E((Q_{n}, {1}))$, as $(Q_{n}, {1})$ is
edge-bipancyclic graph there exists a cycle
   of even length $l$ ($4 \leq l \leq |V(Q_n)|$) passing through $e$.
   Now we want cycle of even length $l$ ($|V(Q_n)| + 2 \leq l \leq |V(Q_n)| + |V(G')|$) passing through
   $e$. As $G'$ is a bipancyclic subgraph of
$(Q_{n}, {0})$, let $C'_k$ be
   a cycle in $G'$ of even length $k$
   ($4 \leq k \leq |V(G')|$) passing through an edge say
   $f'_k$ (choose $f'_k$ such that $f'_k \neq e'$ and $f'_k$ is not adjacent to $e'$). By using Lemma \ref{lm1}, there always exists Hamiltonian
   cycle in $(Q_{n}, {1})$ say $C$ containing $e$ and $f_k$. Now we construct a cycle of even length
   $|V(Q_n)| + k$ containing $e$ as, $(C - f_k) + e_3 + (C'_k - f'_k) +
   e_4$, if we take
   $(C - f_k) + e_3 + f'_k +
   e_4$ then we get cycle of length $|V(Q_n)| + 2$ containing $e$. \\
\textbf{Case} $2$: Consider $T = e' \in E(G')$. Let $C_k$ denotes
a cycle in $(Q_{n}, {1})$ of even length $k$ ($4 \leq k \leq |V(Q_n)|$)
passing through $e$. Then we construct a cycle of even length $l =
k + 2$ ( therefore $6 \leq l \leq |V(Q_n)| + 2$) in $G$ containing
$e'$ as $e' + e_1 + (C_k - e) + e_2$ and for $l = 4$ we take $e' +
e_1 + e + e_2$.\\
As $G'$ is bipancyclic, for every even $k$ ($4 \leq k \leq |V(G')|
- 4$) there exists a cycle say $C'_k$ of length $k$ passing
through an edge say $f'_k$ (we choose $f'_k$ in such a way that it
is not adjacent to an edge $e'$ and $f'_k \neq e'$). By using
Lemma \ref{lm1}, consider a Hamiltonian cycle $C$ in $(Q_{n}, {1})$
containing $e$ and $f_k$. Now a cycle of even length $l = k +
|V(Q_n)| + 2$ ( therefore $|V(Q_n)| + 6 \leq l \leq {|V(Q_n)| +
|V(G')| - 2}$) containing $e'$ in $G$ we construct as $ ((C - e)-
f_k )+ e_1 + e_2  + ((C'_k -e')- f'_k)+ e_3 + e_4$ and for $l =
|V(Q_n)| + 4$, we take $e' + e_1 + ((C - e)- f_k ) + e_2 + e_3 +
f'_k + e_4$.\\
\textbf{Case} $3$: And if $T = e_1$ where $e_1 \in R'$. Let $e_1$
and $e_2$ are the edges joining end vertices of corresponding
edges $e \in
   E(Q_{n}, {1})$ and $e' \in E(G')$. Now to find cycles passing through $e_1$ is
   to same as to find cycles passing through the edge $e'$. This completes the proof. \end{proof}

   \begin{lem}\label{lm5}For $n \geq 3$, consider the subgraphs $(Q_{n}, {0})$ and $(Q_{n}, {1})$ of
$Q_{n+1}$. Let $G_i$
be an edge-bipancyclic subgraphs of $(Q_{n}, {i})$, for $i \in \{0,
1\}$, such that every Hamiltonian cycle in $G_0$ contains at least two edges whose
corresponding edges are in $G_{1}$ and vice-versa. The set of only those
cross edges which join both the end vertices of the corresponding
edges from $G_0$ and $G_1$ be denoted by $R'$. Consider $G = G_0
\bigcup R' \bigcup G_1$. Then $G$ is an edge-bipancyclic
graph.\end{lem}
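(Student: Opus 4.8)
The plan is to adapt the cycle--fusion technique of Lemma \ref{lm4}$(b)$ to this symmetric situation, treating $G_0$ and $G_1$ on an equal footing and replacing the appeals to Lemma \ref{lm1} (which needed a full cube copy) by the hypothesis on matched edges. Write $n_0 = |V(G_0)|$ and $n_1 = |V(G_1)|$, so that $|V(G)| = n_0 + n_1$; since both $G_i$ are bipartite and edge-bipancyclic, $n_0, n_1$ are even and at least $4$, so all length ranges below are nonempty. Call an edge $e_0 \in E(G_0)$ \emph{matched} if its corresponding edge $e_1 \in E(G_1)$ is also present; for such a pair the two cross edges $g, g'$ joining the endpoints of $e_0$ to those of $e_1$ both lie in $R'$ by definition. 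The single device used throughout is \textbf{fusion}: given a cycle $A$ through $e_0$ and a cycle $B$ through $e_1$, the configuration $(A - e_0) + g + (B - e_1) + g'$ is a cycle whose length is the sum of the lengths of $A$ and $B$; fusing a cycle with the lone edge $e_0$ (or $e_1$) instead adds exactly $2$ to its length.

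Next I would run the case analysis on the location of $T$. For $T = e \in E(G_1)$ (the case $T \in E(G_0)$ being symmetric), the lengths $4 \leq l \leq n_1$ are given immediately by edge-bipancyclicity of $G_1 \subseteq G$. For the remaining lengths I would take a Hamiltonian cycle $C$ of $G_1$ through $e$; the hypothesis guarantees that $C$ carries at least two edges whose corresponding edges lie in $G_0$, so I can pick one such edge $f_1 \neq e$, with corresponding $f_0 \in E(G_0)$ and cross edges $g_1, g_2 \in R'$. Fusing $C$ with the single edge $f_0$ produces a cycle of length $n_1 + 2$ through $e$, and fusing $C$ with a cycle of $G_0$ through $f_0$ of each even length $k$ ($4 \leq k \leq n_0$, available by edge-bipancyclicity of $G_0$) produces all lengths $n_1 + 4 \leq l \leq n_0 + n_1$; in every case $e$ survives in $C - f_1$.

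For $T = g \in R'$ no appeal to the two-edge hypothesis is needed, since $g$ already sits at an endpoint of a matched pair $(e_0, e_1)$ with its partner cross edge $g'$. Here the $4$-cycle $e_0 + g' + e_1 + g$ gives $l = 4$; fusing the edge $e_0$ (together with $g, g'$) with a cycle of $G_1$ through $e_1$ of even length $k$ ($4 \leq k \leq n_1$) gives $6 \leq l \leq n_1 + 2$; and fusing a Hamiltonian cycle of $G_1$ through $e_1$ with a cycle of $G_0$ through $e_0$ of even length $k$ ($4 \leq k \leq n_0$) gives $n_1 + 4 \leq l \leq n_0 + n_1$. These three families exhaust all even lengths up to $|V(G)|$, and each constructed cycle contains $g$.

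The delicate point, and the place where the symmetric ``at least two corresponding edges'' hypothesis is indispensable, is the selection of the auxiliary matched edge $f_1$ in the case $T \in E(G_1)$: a single matched edge on $C$ might coincide with $e$ itself, and only the guarantee of a second one yields a usable $f_1 \neq e$. I would note that, because we merely delete $f_1$ while retaining $e$, no non-adjacency of $f_1$ and $e$ is required, which is a genuine simplification over the argument in Lemma \ref{lm4}. I would close by checking that the three length ranges meet without gaps (consecutive even integers at the junctions near $n_1$ and $n_1 + 2$) and that every cross edge invoked indeed belongs to $R'$.
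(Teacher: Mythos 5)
Your proposal is correct and follows essentially the same route as the paper's own proof: the same case split (an edge in $G_1$, with $G_0$ handled by symmetry, then a cross edge in $R'$), the same fusion constructions $(C - f) + e_1 + (C'_k - f') + e_2$ and $(C - f) + e_1 + f' + e_2$, and the same length ranges. Your explicit observations — that the ``at least two matched edges'' hypothesis is exactly what guarantees an auxiliary edge $f_1 \neq e$ on the Hamiltonian cycle, and that no non-adjacency condition is needed here unlike in Lemma \ref{lm4} — are points the paper uses only implicitly, but they do not change the argument.
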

\begin{proof}\textbf{Case} $1$: Let $e \in E(G_i)$, for $i \in \{0, 1\}$. Due to symmetry, it is sufficient to prove that any edge
$e \in E(G_1)$ is contained in a cycle of even length $l$, $4 \leq
l \leq {|V(G_1)| + |V(G_0)|}$. As $G_1$ is an edge-bipancyclic
graph, therefore, $e$ is contained in a cycle of even length $l$, $4
\leq l \leq {|V(G_1)|}$.

 Let $C$ denotes Hamiltonian cycle in $G_1$ containing $e$. By
 assumption, there exists at least one edge of $C$ other than $e$
 say $f$ such that its corresponding edge $f'$ is in $G_0$. Thus if $f = \langle A_1, A_2 \rangle$
 then $e_1 = \langle A_1, B_1 \rangle$ and $e_2 = \langle A_2, B_2 \rangle$ such that $e_1, e_2 \in R'$
 where $A_1, A_2 \in V(G_1)$ and $B_1, B_2 \in V(G_0)$. Now let us denote by
 $C'_k$ a cycle of even length $k$ ($4 \leq k \leq |V(G_0)|$) containing $f'$ in
 $G_0$, Now we construct cycle of even length $l = k + |V(G_1)|$ (as $4 \leq k \leq |V(G_0)|$, therefore $|C| + 4 \leq l \leq {{|V(G_1)| + |V(G_0)|}}$)
 containing $e$
 as $(C - f) + e_1 + (C'_k - f') + e_2$ and for $l = |C| + 2$ we
 take cycle
 $(C - f) + e_1 + f' + e_2$.\\
 \textbf{Case} $2$: For $e_1 \in R'$. By assumption there exists $e_2 \in R'$, such that $e_1$ and $e_2$ are the edges joining end vertices of
  corresponding edges $f \in
   E(G_1)$ and $f' \in E(G_0)$. Let $C_k$ denotes a cycle of even length $k$ ($4 \leq k \leq |V(G_1)|$) containing $f$ in
 $G_1$. Now we construct a cycle of even length $l = k + 2$ ( therefore $6 \leq l \leq {|V(G_1)| + 2}$) containing $e_1$
 as $(C_k - f) + e_1 + f' + e_2$ and for $l = 4$ we take
 $f + e_1 + f' + e_2$. Let $C$ be the Hamiltonian cycle containing an edge $f$
 in $G_1$ and let $C'_k$ be denotes cycle of length $k$ ($4 \leq k
 \leq |V(G_0)|$ containing $f'$ in $G_0$. Now we construct cycle of even length $l = |V(G_1)| + k$
 ( therefore $|V(G_1)| + 4 \leq l \leq {{|V(G_1)| + |V(G_0)|}}$) containing $e_1$
 as $(C - f) + e_1 + (C'_k - f') + e_2$.
\end{proof}

\begin{lem}\label{lm6} For $n \geq 3$, consider the subgraphs $(Q_{n}, {0})$ and $(Q_{n}, {1})$ of
$Q_{n+1}$. Let $G_i$ be the subgraph of $(Q_{n}, {i})$, for $i \in
\{0, 1\}$. If an even length cycle $C$ in $Q_{n+1}$ contains at
least $3$ vertices of $G_i$ for all $i \in \{0, 1\}$, then $|E(C)|
\bigcap |E(G_i)| \geq 2$ for all $i \in \{0, 1\}$.
\end{lem}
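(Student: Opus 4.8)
The plan is to exploit the decomposition $Q_{n+1} = (Q_n,0) \cup (Q_n,1) \cup R$, where $R$ is a perfect matching of cross edges, and to count the edges of $C$ in each copy by a parity-and-path argument. First I would observe that, since $C$ meets both copies, it must use at least one cross edge, and because traversing a cross edge switches the copy while $C$ is closed, the number of cross edges it uses is even; write it as $2t$ with $t \geq 1$. Deleting these $2t$ cross edges from $C$ breaks it into $2t$ arcs which, as one goes around $C$, alternate strictly between $(Q_n,0)$ and $(Q_n,1)$; hence exactly $t$ of them lie in each copy.

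The crucial structural input is that $R$ is a perfect matching, so every vertex is incident with exactly one cross edge. I would use this to show that no arc can reduce to a single vertex: if some maximal arc in copy $i$ consisted of one vertex $u$, then the two cross edges of $C$ entering and leaving that arc would both be incident with $u$, forcing $u$ to meet two cross edges, which is impossible. Consequently each of the $t$ arcs in copy $i$ has at least two vertices, so, writing $v_i$ for the number of vertices of $C$ in $G_i$, we obtain $v_i \geq 2t$. Moreover the edges of $C$ lying in copy $i$ are exactly the internal edges of these $t$ arcs, and there are $v_i - t$ of them; under the hypotheses these are precisely the edges counted by $|E(C) \cap E(G_i)|$.

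It then remains to verify $v_i - t \geq 2$ under the assumption $v_i \geq 3$, which I would settle by a short case split on $t$. When $t = 1$ the single arc carries all vertices of $C$ in $G_i$, so the hypothesis $v_i \geq 3$ gives $v_i - t = v_i - 1 \geq 2$. When $t \geq 2$ the bound $v_i \geq 2t$ already yields $v_i - t \geq t \geq 2$. In either case $|E(C) \cap E(G_i)| \geq 2$, and since the whole argument is symmetric in $i$ it holds for both $i \in \{0,1\}$.

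The main obstacle, and the only point where the perfect-matching structure is genuinely needed, is ruling out the degenerate single-vertex arcs; once that is secured the counting is routine, and the "at least three vertices" hypothesis is used exactly (and only) to dispose of the borderline case $t = 1$, where without it one could guarantee just a single shared edge.
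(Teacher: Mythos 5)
Your proof is correct and rests on exactly the same key fact as the paper's: the cross edges between $(Q_{n},{0})$ and $(Q_{n},{1})$ form a perfect matching, so each vertex has a unique neighbour in the other copy, which is what rules out single-vertex arcs. The paper's proof consists of stating this fact and declaring the rest immediate; your alternating-arc count ($2t$ cross edges, $t$ arcs per copy, $e_i = v_i - t \geq 2$) simply supplies the routine details the paper leaves to the reader.
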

\begin{proof} For every vertex in $(Q_{n}, {0})$ there exists a unique vertex in
$(Q_{n}, {1})$ which is adjacent to it, hence the proof follows
immediately. \end{proof}

\begin{lem}\label{lm7} For $n \geq 3$, let $F$ denotes a strongly
   independent set and $P$ denotes a spanning subgraph of $Q_n$ such that for every vertex $v$ of $Q_n$,
   $d_p(v) = 1$ or $2$. Then $|E(Q_n - F)| \bigcap |E(P)| \geq 2$. \end{lem}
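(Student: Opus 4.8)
The plan is to prove the bound $|E(Q_n-F)\cap E(P)|\ge 2$ by double counting, using a sphere-packing estimate to control the size of $F$. First I would extract the two facts forced by the hypotheses. Because $F$ is strongly independent, $N[u]\cap N[v]=\phi$ for distinct $u,v\in F$, so in particular $F$ spans no edge of $Q_n$; hence every edge of $P$ that meets $F$ has exactly one endpoint in $F$, and the number of edges of $P$ incident with $F$ is $\sum_{v\in F}d_P(v)\le 2|F|$, each such vertex having degree at most $2$ in $P$. On the other hand $P$ is spanning with $d_P(v)\ge 1$ for every vertex, so $2|E(P)|=\sum_v d_P(v)\ge 2^n$ and thus $|E(P)|\ge 2^{n-1}$.

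Writing $m$ for the number of edges of $P$ having no endpoint in $F$, that is $m=|E(Q_n-F)\cap E(P)|$, the edges of $P$ split into those meeting $F$ and those avoiding it, so $m=|E(P)|-\sum_{v\in F}d_P(v)\ge 2^{n-1}-2|F|$. It then remains to bound $|F|$ from above, and here I would use that the closed neighbourhoods $N[v]$, $v\in F$, are pairwise disjoint of size $n+1$, which gives $|F|(n+1)\le 2^n$, i.e. $|F|\le\lfloor 2^n/(n+1)\rfloor$. For $n\ge 5$ this yields $m\ge 2^{n-1}-2^{n+1}/(n+1)=2^{n-1}(n-3)/(n+1)>2$, and for $n=4$ the integer bound $|F|\le 3$ gives $m\ge 2^{3}-6=2$ at once. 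Thus the inequality holds for every $n\ge 4$ from these two elementary counts together with the packing inequality.

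I expect the genuine difficulty to sit entirely in the base case $n=3$, where the packing bound only gives $|F|\le 2$ and the crude estimate degenerates to $m\ge 0$. For $n=3$ I would abandon pure counting and analyse the geometry directly: a strongly independent set of maximum size in $Q_3$ is an antipodal pair $\{v,\bar v\}$, and then $Q_3-F$ is precisely a $6$-cycle $C$ whose six vertices each have exactly one $Q_3$-neighbour in $F$ (three attached to $v$, three to $\bar v$). One would then refine $m=|E(P)|-\sum_{v\in F}d_P(v)$ by tracking, along $C$, how much of the total $P$-degree of its vertices is consumed by cross-edges running to $F$ as opposed to edges lying inside $C$. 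This is the main obstacle, since a purely local degree count does not by itself pin $m$ down here, and I would expect to have to invoke the origin of $P$ in the intended application (for instance that $P$ is the trace of a single cycle, so that its components are globally constrained rather than being an arbitrary spanning linear forest) in order to rule out the extremal distribution and secure $m\ge 2$ on this $6$-cycle.
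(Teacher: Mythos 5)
Your counting argument for $n \geq 4$ is correct and takes a genuinely different route from the paper's. The paper argues locally around vertices of $F$: for $u \in F$, at most two neighbours of $u$ are $P$-joined to $u$; each of the remaining $n-2$ neighbours $u_i$ carries a $P$-edge $u_iw_i$ whose endpoints both avoid $F$ (by strong independence), and these $n-2$ edges are pairwise distinct because $Q_n$ is triangle-free. For $n \geq 4$ this already produces two edges from a single vertex of $F$, whereas the paper actually invokes a second vertex $v \in F$; your global count ($|E(P)| \geq 2^{n-1}$, at most $2|F|$ edges of $P$ meet $F$, and the packing bound $|F|(n+1) \leq 2^n$) reaches the same conclusion with no local analysis. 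Both arguments collapse at $n = 3$, and you have located the crux exactly.

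At $n = 3$ the situation is worse than a gap in your write-up: the lemma as stated is false, so no completion of your argument (or of the paper's) is possible without changing the hypotheses. Take $F = \{000, 111\}$, which is strongly independent, and let $P$ consist of the five edges $\{000,001\}$, $\{000,010\}$, $\{111,011\}$, $\{111,101\}$, $\{100,110\}$. Then $P$ is spanning, every vertex has $P$-degree $1$ or $2$, yet exactly one edge of $P$, namely $\{100,110\}$, lies in $Q_3 - F$. This example also exposes the flaw in the paper's own proof: there the edge $u_3w_3$ found near $u$ and the edge $v_3w_3'$ found near $v$ are tacitly assumed to be distinct, but here they coincide ($u = 000$, $u_3 = 100$, $w_3 = 110 = v_3$, $w_3' = 100$), so the paper produces only one edge when $n = 3$, $|F| = 2$. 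Consequently your closing instinct is not a convenience but a necessity: to make the lemma usable in the proof of Theorem \ref{tm2} one must either restrict it to $n \geq 4$, or, for $n = 3$, add hypotheses reflecting the provenance of $P$ there (it is the trace on a full $Q_3$-layer of a single Hamiltonian cycle of $H_j$, and the vertices deprived of cross edges form a coset different from the one playing the role of $F$); whether those constraints suffice requires an argument that neither you nor the paper supplies, so the lemma needs to be restated and reproved before Theorem \ref{tm2} can stand.
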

\begin{proof} As $P$ is a spanning subgraph, every vertex $u \in V(F)$ lies on $P$. If $|F| = 1$, then we are through.
Now let $|F| \geq 2$. Let $u, v \in F$, and let us denote by $u_i$
and $v_i$ the $n$ neighbors of $u$ and $v$ in $Q_n$ respectively
($1 \leq i \leq n$). Let $uu_1, uu_2 \in E(P)$. Due to
trianglefreeness of $Q_n$, $u_i$ ($3 \leq i \leq n$) can not be
adjacent to $u_1$ and $u_2$, but $d_p(u_i) = 1$ or $2$, therefore,
$u_i$ ($3 \leq i \leq n$) is adjacent to some vertex $w_i$ in $P$.
We note that $w_i \notin F$ otherwise it will contradict strongly
independentness of $F$. By similar arguments, $v_i$ ($3 \leq i \leq
n$) is adjacent to some vertex $w'_i$ in $P$ which is not in $F$.
Thus, we proved.\end{proof}

For any $x \in Q_n$ the mapping $u \longrightarrow {u+x}$ is a
graph automorphism of $Q_n$ also for any permutation $\sigma \in
S_n$, the mapping $(v_1, v_2,...,v_n) \longrightarrow
(v_{\sigma{1}}, v_{\sigma{2}}, v_{\sigma{3}},....,v_{\sigma{n}})$
is also a graph automorphism of $Q_n$, therefore the following
result is true in view of Lemma \ref{l9}.

\begin{lem}\label{lm8}If $D$ is a PID set of
$Q_n$ such that $Q_n - D$ is edge-bipancyclic then;\\
$(a)$ $Q_n - D_i$ is also edge-bipancyclic where for $1 \leq i
\leq n$, $D_i = D + e_i$ and $e_i = (0, 0,....,1,0...,0)$ ($1$ is
at $i^{th}$ position);\\
$(b)$ in addition, if $D$ is linear then for any linear perfect
independent dominating set $D_0$ of $Q_n$, $Q_n - D_0$ is also
edge-bipancyclic.
\end{lem}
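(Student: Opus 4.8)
The plan is to reduce both parts to a single principle: edge-bipancyclicity is invariant under graph isomorphism, since any isomorphism carries cycles to cycles of equal length and edges to edges bijectively. Hence, to prove that $Q_n - D_i$ (resp. $Q_n - D_0$) is edge-bipancyclic, it suffices to produce a graph automorphism of $Q_n$ that carries the vertex set $D$ onto $D_i$ (resp. onto $D_0$); such an automorphism restricts to an isomorphism between the induced complements $Q_n - D$ and $Q_n - D_i$ (resp. $Q_n - D_0$), and edge-bipancyclicity transfers from the former to the latter.

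For part $(a)$, I would take the translation $\phi_i \colon u \mapsto u + e_i$, which is a graph automorphism of $Q_n$ by the remark preceding the lemma. By definition $\phi_i(D) = D + e_i = D_i$, so $\phi_i$ maps $V(Q_n)\setminus D$ bijectively onto $V(Q_n)\setminus D_i$ and induces an isomorphism $Q_n - D \cong Q_n - D_i$. (That $D_i$ is again a PID set follows from Observation \ref{ob1}$[a]$.) Since $Q_n - D$ is edge-bipancyclic by hypothesis, so is $Q_n - D_i$.

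For part $(b)$, I would invoke the parity-check description recalled in Section 2: every linear PID set of $Q_n$ ($n = 2^m - 1$) is the kernel of an $m \times (2^m-1)$ matrix whose columns are exactly the nonzero vectors of $F_2^m$. Let $H$ and $H_0$ be such matrices with $D = \ker H$ and $D_0 = \ker H_0$. Because the two matrices have the same set of columns, $H_0 = H P$ for some $(2^m-1)\times(2^m-1)$ permutation matrix $P$; writing $\sigma$ for the coordinate permutation $x \mapsto P x$, one checks that $H_0 x = 0$ iff $H(Px)=0$, so $\sigma$ carries $D_0 = \ker H_0$ onto $D = \ker H$. As $\sigma$ is a graph automorphism of $Q_n$ (again by the remark preceding the lemma), it induces an isomorphism $Q_n - D_0 \cong Q_n - D$, and edge-bipancyclicity of $Q_n - D$ yields that of $Q_n - D_0$.

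The routine parts are the two verifications that the stated maps send $D$ to $D_i$ and $D_0$ to $D$; the only point requiring care, and the main obstacle, is the structural fact underlying part $(b)$, namely that any two linear PID sets arise as kernels of parity-check matrices with the identical column set and therefore differ by a pure column permutation. This is exactly the uniqueness of the Hamming code up to coordinate permutation; I would make sure that the left row-operations implicit in choosing a basis of $F_2^m$ do not affect the kernel (left multiplication by an invertible matrix preserves $\ker$), so that only the column permutation $P$ survives and acts as the required graph automorphism fixing $Q_n$.
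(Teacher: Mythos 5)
Your proposal is correct and is essentially the paper's own argument: the paper justifies this lemma precisely by the remark that the translations $u \mapsto u + x$ and the coordinate permutations $(v_1,\ldots,v_n) \mapsto (v_{\sigma(1)},\ldots,v_{\sigma(n)})$ are graph automorphisms of $Q_n$, combined with the fact (cited from Dejter) that every linear PID set is the kernel of a parity-check matrix whose columns are all nonzero $m$-tuples, so any two such sets differ by a column permutation. Your write-up just makes explicit the transfer of edge-bipancyclicity along the induced isomorphisms of the complements, which is exactly what the paper leaves implicit.
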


Now we prove our main result of this section which gives
edge-bipancyclicity of a hamming shell.

\begin{thm}\label{tm2}For any coset of any linear PID set $D$ of
$Q_n$ $(n = 2^m - 1 $ and $m \geq 3)$, $Q_n - D$ is
edge-bipancyclic.\end{thm}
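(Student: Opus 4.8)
The plan is to prove the theorem by induction on $m$, using the recursive construction of PID sets from Lemma \ref{l9} together with the decomposition machinery developed in the preliminary lemmas. The base case is $m = 3$, where $n = 7$ and $D$ is a coset of the Hamming code of length $7$ (the Dejter graph setting). For the base case I would decompose $Q_7 = (Q_3,0) \times (Q_4,\ldots)$-style, or more directly build $Q_7$ from smaller cubes so that Lemma \ref{lm2} (which gives that $Q_3 - D$ is Hamiltonian for any PID set of $Q_3$) and Lemma \ref{lm3} / the Note following it supply edge-bipancyclic building blocks. The key is that after removing the PID set, each copy of $Q_3$ loses exactly one vertex and retains a Hamiltonian structure, so the pieces are edge-bipancyclic or at least Hamiltonian as required.

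For the inductive step, suppose $Q_n - D'$ is edge-bipancyclic for $n = 2^m - 1$, where $D'$ is (a coset of) a linear PID set; I want to conclude the same for $k = 2^{m+1} - 1 = n + (n+1)$. Here I would use the decomposition $Q_k = Q_n \times Q_{n+1}$ from the preliminaries and Observation \ref{ob1}, which tells us precisely how $D$ meets each copy $(Q_n, t)$: for odd-parity $t \in V(Q_{n+1})$ the copy is untouched ($D \cap V(Q_n,t) = \phi$), and for even-parity $t$ exactly one translate $(D'_i, t)$ is removed. By Lemma \ref{lm8}, every such $Q_n - D'_i$ is again edge-bipancyclic, since each $D'_i = D' + e_i$ is a PID set obtained by translation and the hypercube automorphisms preserve edge-bipancyclicity. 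Thus the $2^{n+1}$ copies of $Q_n$ are each either a full $Q_n$ (edge-bipancyclic by the preliminaries) or a $Q_n$ minus a PID coset (edge-bipancyclic by induction and Lemma \ref{lm8}).

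The heart of the argument is then to glue these edge-bipancyclic copies together along the cross-edge matchings $R$ into a single edge-bipancyclic graph on all of $Q_k - D$. This is exactly what Lemma \ref{lm5} is designed for: given two edge-bipancyclic subgraphs $G_0, G_1$ sitting in $(Q_n,0)$ and $(Q_n,1)$ such that every Hamiltonian cycle in $G_0$ meets at least two cross-connected edges of $G_1$ and vice versa, the union $G_0 \cup R' \cup G_1$ is edge-bipancyclic. The verification of the ``at least two corresponding edges'' hypothesis is supplied by Lemmas \ref{lm6} and \ref{lm7}: since the removed set in each copy is strongly independent (distant) and a Hamiltonian cycle induces a spanning subgraph of degree $1$ or $2$, Lemma \ref{lm7} guarantees at least two surviving matched edges. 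I would apply Lemma \ref{lm5} iteratively, building the full $Q_{n+1}$-indexed family of copies up through the dimensions of $Q_{n+1}$ one coordinate at a time, at each stage combining two already-edge-bipancyclic halves into a larger edge-bipancyclic graph.

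The main obstacle will be bookkeeping the gluing hypothesis of Lemma \ref{lm5} uniformly across all $2^{n+1}$ copies and all intermediate stages of the induction on the coordinates of $Q_{n+1}$: I must verify that at every merging step the two halves are genuinely edge-bipancyclic (not merely Hamiltonian) and that every Hamiltonian cycle in one half meets at least two cross-edges matched into the other half. The odd/even parity split from Observation \ref{ob1}$[b]$ means the copies are not all of the same type, so I would need to check that mixing a full $Q_n$ with a punctured $Q_n - D'_i$ still satisfies the two-common-edge condition; this is where Lemmas \ref{lm6} and \ref{lm7}, applied to the strongly independent (distant) removed sets, do the real work. Once the gluing hypothesis is secured at each stage, the conclusion follows by repeated application of Lemma \ref{lm5}, and the length ranges automatically extend from $4 \leq l \leq |V(Q_n)|$ up to $4 \leq l \leq |V(Q_k - D)|$, completing the induction.
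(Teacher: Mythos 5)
Your plan follows the paper's broad strategy (induction on $m$, the decomposition $Q_k = Q_n \times Q_{n+1}$, Observation \ref{ob1} to locate $D$ inside the copies, and gluing edge-bipancyclic pieces via Lemma \ref{lm5} with hypotheses checked by Lemmas \ref{lm6} and \ref{lm7}), but the one place where you deviate is exactly where the argument breaks. The paper does not merge the $2^{n+1}$ copies coordinate-by-coordinate. It first fixes a Hamiltonian cycle $t_1 t_2 \ldots t_{2^{n+1}} t_1$ of $Q_{n+1}$ and glues the copies \emph{linearly} along that cycle: by parity, full and punctured copies alternate along it, each consecutive pair $H_j = G_j \cup E_j \cup G_{j+1}$ ($j$ odd) is edge-bipancyclic by Lemma \ref{lm3} in the base case and by Lemma \ref{lm4}(b) in the inductive step, and then the $H_j$'s are concatenated one at a time, the paradigm step being $H_1 \cup E_2 \cup H_3$. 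The point of this linearization is that at every concatenation step each vertex of the two interface copies ($G_{2j}$ and $G_{2j+1}$) has at most one cross edge inside the piece containing it, so the restriction of any Hamiltonian cycle of that piece to the interface copy is a spanning subgraph of a copy of $Q_n$ with all degrees $1$ or $2$. That is precisely the hypothesis under which Lemma \ref{lm7} (together with the alternation argument behind Lemma \ref{lm6}) yields the ``at least two corresponding surviving edges'' condition that Lemma \ref{lm5} demands.

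Your recursive doubling destroys this property from the third merge onward. Once a half is a union of four or more copies of $Q_n$, a vertex of a punctured copy has two or more cross-edge directions available inside its own half, so a Hamiltonian cycle of that half can pass through every punctured-copy vertex using two cross edges and contain no edge at all inside any punctured copy; likewise its restriction to a full copy need not have all degrees $1$ or $2$ (some vertices may get degree $0$), and the cycle is not a spanning subgraph of the sub-hypercube underlying the half, since the removed vertices are missing. Hence neither Lemma \ref{lm6} nor Lemma \ref{lm7} applies as stated, and the gluing hypothesis of Lemma \ref{lm5} --- which you correctly single out as the main obstacle --- is left unverified at all but the first two levels, both in your inductive step and in your base case ($Q_7 = Q_3 \times Q_4$ has four merge levels). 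One can rescue your scheme by a crude counting argument (the removed positions are too sparse to kill all but one edge of a Hamiltonian cycle), but that argument is not in your proposal and is not what the cited lemmas say; the paper's Hamiltonian-cycle linearization is the missing idea that makes them applicable verbatim. A small factual correction as well: in the base case an even-parity copy $(Q_3, t_j)$ loses a whole PID set of $Q_3$ (two antipodal vertices), not ``exactly one vertex''; what remains is the $6$-cycle of Lemma \ref{lm2}, which is exactly the graph $G'$ that Lemma \ref{lm3} is built to handle.
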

\begin{proof}
We follow notations of Observation \ref{ob1}.\\

\textbf{Part I} : We decompose $Q_7$ as $Q_7 = Q_3 \times Q_4$.
Let $C = t_1t_2t_3.....t_{16}t_1$ be a Hamiltonian cycle in
$Q_{4}$. Without loss of generality say $t_j$ is of odd parity,
for odd integers $j$, $1 \leq j \leq 16$. Obviously, $t_j$ is of
even parity, for even integers $j$, $1 \leq j \leq 16$ (see Figure
$3$).~\\
\begin{center}
%TeXCAD Picture [Fig p5-3.pic]. Options:
%\grade{\on}
%\emlines{\off}
%\epic{\off}
%\beziermacro{\on}
%\reduce{\on}
%\snapping{\off}
%\pvinsert{% Your \input, \def, etc. here}
%\quality{8.000}
%\graddiff{0.005}
%\snapasp{1}
%\zoom{4.0000}
\unitlength 1mm % = 2.845pt
\linethickness{0.4pt}
\ifx\plotpoint\undefined\newsavebox{\plotpoint}\fi % GNUPLOT compatibility
\begin{picture}(102.258,68.5)(0,0)
\put(12,11){\circle{8.016}} \put(36.5,11){\circle{8.016}}
\put(60.75,11){\circle{8.016}} \put(98.25,11){\circle{8.016}}
\put(12,32.75){\circle{8.016}} \put(36.5,32.75){\circle{8.016}}
\put(60.75,32.75){\circle{8.016}}
\put(98.25,32.75){\circle{8.016}} \put(9.5,29.25){\line(0,-1){15}}
\put(34,29.25){\line(0,-1){15}} \put(58.25,29.25){\line(0,-1){15}}
\put(95.75,29.25){\line(0,-1){15}} \put(9.5,14.25){\line(0,1){0}}
\put(34,14.25){\line(0,1){0}} \put(58.25,14.25){\line(0,1){0}}
\put(95.75,14.25){\line(0,1){0}}
\put(11.75,28.25){\line(0,-1){13.75}}
\put(36.25,28.25){\line(0,-1){13.75}}
\put(60.5,28.25){\line(0,-1){13.75}}
\put(98,28.25){\line(0,-1){13.75}}
\put(14.5,29.5){\line(0,-1){15}} \put(39,29.5){\line(0,-1){15}}
\put(63.25,29.5){\line(0,-1){15}}
\put(100.75,29.5){\line(0,-1){15}}
\put(11.75,8.5){\makebox(0,0)[cc]{\tiny{$(Q_3, {t_1})$}}}
\put(94.25,68.5){\makebox(0,0)[cc]{}}
\put(94.25,68.5){\makebox(0,0)[cc]{}}
\put(94.25,68.5){\makebox(0,0)[cc]{}}
\put(94.25,68.5){\makebox(0,0)[cc]{}}
\put(12.25,32.75){\makebox(0,0)[cc]{\tiny{$(Q_3, {t_2})$}}}
\put(36.5,8.75){\makebox(0,0)[cc]{\tiny{$(Q_3, {t_3})$}}}
\put(36.5,32.75){\makebox(0,0)[cc]{\tiny{$(Q_3, {t_4})$}}}
\put(60.5,9){\makebox(0,0)[cc]{\tiny{$(Q_3, {t_5})$}}}
\put(61,33){\makebox(0,0)[cc]{\tiny{$(Q_3, {t_6})$}}}
\put(98,8.75){\makebox(0,0)[cc]{\tiny{$(Q_3, {t_{15}})$}}}
\put(98.5,33){\makebox(0,0)[cc]{\tiny{$(Q_3, {t_{16}})$}}}
%\emline(15.5,31.25)(32.75,9.5)
\multiput(15.5,31.25)(.0336914063,-.0424804688){512}{\line(0,-1){.0424804688}}
%\end
%\emline(16,32.5)(32.5,11.75)
\multiput(16,32.5)(.0336734694,-.0423469388){490}{\line(0,-1){.0423469388}}
%\end
%\emline(15.25,30.5)(15,30.75)
\multiput(15.25,30.5)(-.03125,.03125){8}{\line(0,1){.03125}}
%\end
%\emline(15,35.5)(33,13.5)
\multiput(15,35.5)(.0337078652,-.0411985019){534}{\line(0,-1){.0411985019}}
%\end
%\emline(39.75,30.25)(57,9)
\multiput(39.75,30.25)(.0336914063,-.0415039063){512}{\line(0,-1){.0415039063}}
%\end
%\emline(40.25,31.75)(56.75,11.25)
\multiput(40.25,31.75)(.0336734694,-.0418367347){490}{\line(0,-1){.0418367347}}
%\end
%\emline(40.25,34)(57,13.25)
\multiput(40.25,34)(.0337022133,-.041750503){497}{\line(0,-1){.041750503}}
%\end
%\dashline{1}(66.75,32.75)(91,32.75)
\put(66.68,32.68){\line(1,0){.97}}
\put(68.62,32.68){\line(1,0){.97}}
\put(70.56,32.68){\line(1,0){.97}}
\put(72.5,32.68){\line(1,0){.97}}
\put(74.44,32.68){\line(1,0){.97}}
\put(76.38,32.68){\line(1,0){.97}}
\put(78.32,32.68){\line(1,0){.97}}
\put(80.26,32.68){\line(1,0){.97}}
\put(82.2,32.68){\line(1,0){.97}}
\put(84.14,32.68){\line(1,0){.97}}
\put(86.08,32.68){\line(1,0){.97}}
\put(88.02,32.68){\line(1,0){.97}}
\put(89.96,32.68){\line(1,0){.97}}
%\end
%\dashline{1}(67,27.75)(90.75,28)
\put(66.93,27.68){\line(1,0){.9896}}
\put(68.909,27.701){\line(1,0){.9896}}
\put(70.888,27.721){\line(1,0){.9896}}
\put(72.867,27.742){\line(1,0){.9896}}
\put(74.846,27.763){\line(1,0){.9896}}
\put(76.826,27.784){\line(1,0){.9896}}
\put(78.805,27.805){\line(1,0){.9896}}
\put(80.784,27.826){\line(1,0){.9896}}
\put(82.763,27.846){\line(1,0){.9896}}
\put(84.742,27.867){\line(1,0){.9896}}
\put(86.721,27.888){\line(1,0){.9896}}
\put(88.701,27.909){\line(1,0){.9896}}
%\end
%\dashline{1}(66.5,22.75)(89.75,22.5)
\put(66.43,22.68){\line(1,0){.9688}}
\put(68.367,22.659){\line(1,0){.9688}}
\put(70.305,22.638){\line(1,0){.9688}}
\put(72.242,22.617){\line(1,0){.9688}}
\put(74.18,22.596){\line(1,0){.9688}}
\put(76.117,22.576){\line(1,0){.9688}}
\put(78.055,22.555){\line(1,0){.9688}}
\put(79.992,22.534){\line(1,0){.9688}}
\put(81.93,22.513){\line(1,0){.9688}}
\put(83.867,22.492){\line(1,0){.9688}}
\put(85.805,22.471){\line(1,0){.9688}}
\put(87.742,22.451){\line(1,0){.9688}}
%\end
%\dashline{1}(66.5,17.75)(88.75,17.25)
\put(66.43,17.68){\line(1,0){.9674}}
\put(68.364,17.636){\line(1,0){.9674}}
\put(70.299,17.593){\line(1,0){.9674}}
\put(72.234,17.549){\line(1,0){.9674}}
\put(74.169,17.506){\line(1,0){.9674}}
\put(76.104,17.462){\line(1,0){.9674}}
\put(78.038,17.419){\line(1,0){.9674}}
\put(79.973,17.375){\line(1,0){.9674}}
\put(81.908,17.332){\line(1,0){.9674}}
\put(83.843,17.288){\line(1,0){.9674}}
\put(85.778,17.245){\line(1,0){.9674}}
\put(87.712,17.201){\line(1,0){.9674}}
%\end
%\dashline{1}(67,13)(90.25,13)
\put(66.93,12.93){\line(1,0){.9688}}
\put(68.867,12.93){\line(1,0){.9688}}
\put(70.805,12.93){\line(1,0){.9688}}
\put(72.742,12.93){\line(1,0){.9688}}
\put(74.68,12.93){\line(1,0){.9688}}
\put(76.617,12.93){\line(1,0){.9688}}
\put(78.555,12.93){\line(1,0){.9688}}
\put(80.492,12.93){\line(1,0){.9688}}
\put(82.43,12.93){\line(1,0){.9688}}
\put(84.367,12.93){\line(1,0){.9688}}
\put(86.305,12.93){\line(1,0){.9688}}
\put(88.242,12.93){\line(1,0){.9688}}
%\end
\put(54,2.75){\makebox(0,0)[cc]{Figure $3$}}
%\emline(64.5,31.25)(75.75,11.75)
\multiput(64.5,31.25)(.0336826347,-.0583832335){334}{\line(0,-1){.0583832335}}
%\end
%\emline(64.75,32.75)(77,12.5)
\multiput(64.75,32.75)(.0336538462,-.0556318681){364}{\line(0,-1){.0556318681}}
%\end
%\emline(64.75,34.75)(78.25,12.75)
\multiput(64.75,34.75)(.0336658354,-.0548628429){401}{\line(0,-1){.0548628429}}
%\end
%\emline(86.5,33)(95,13.5)
\multiput(86.5,33)(.0337301587,-.0773809524){252}{\line(0,-1){.0773809524}}
%\end
%\emline(85.25,32.75)(94.25,12.25)
\multiput(85.25,32.75)(.0337078652,-.0767790262){267}{\line(0,-1){.0767790262}}
%\end
%\emline(83.75,33)(94,10.5)
\multiput(83.75,33)(.0337171053,-.0740131579){304}{\line(0,-1){.0740131579}}
%\end
%\emline(15.5,13)(94.5,31.75)
\multiput(15.5,13)(.1420863309,.0337230216){556}{\line(1,0){.1420863309}}
%\end
%\emline(15.75,11.5)(95,30.75)
\multiput(15.75,11.5)(.1387915937,.0337127846){571}{\line(1,0){.1387915937}}
%\end
%\emline(15,14)(94.25,32.75)
\multiput(15,14)(.1425359712,.0337230216){556}{\line(1,0){.1425359712}}
%\end
\end{picture}
 \end{center}~\\

Let $D'_0$ be the Kernel of matrix $H'$
and let $D'_i = D'_0 + e_i$ (for $1 \leq i \leq 3$, $e_i =
(0,..1,..0)$ $1$ is at $i^{th}$ position) be its cosets. Now by
lemma \ref{l9} and observation \ref{ob1} , we get the kernel of $H$ say $D$ such that for
odd integer $j$, $D \cap (Q_3, {t_j}) = \phi$ and for even integer
$j$, $D \cap (Q_3, {t_j}) = (D', {t_j})$ where $D' \in
\{D'_0,\ldots,D'_3\}$ ($1 \leq j \leq 16$).\\

Now we claim that $Q_7 - D$ is edge-bipancyclic.\\

Let us denote by $G_j =(Q_3, {t_j})- ( D \cap (Q_3, {t_j}) )$ ($1
\leq j \leq 16$). For every even integer $j$ ($1 \leq j \leq 16$),
$G_j$ is isomorphic to one of the $G'_i$ ($G'_i = Q_3 - D'_i$, see
Lemma \ref{lm2}) which is a Hamiltonian cycle ($0 \leq i \leq 3$). And
for odd integer $j$, $G_j = (Q_3, {t_j})$ which is isomorphic to
$Q_3$.

Let us denote by $E_j$ the set of cross edges between $G_j$ and
$G_{j+1}$ ($1 \leq j \leq 15$), therefore $E_j = \{e :$ one end
vertex of an edge $e$ is in  $G_l, l$ is an even integer with $l
\in \{j, j+1\} \}$ (see Figure $4$).

Let $H_j = G_j \bigcup E_j \bigcup G_{j+1}$ ($j$ is an odd
integer, $1 \leq j \leq 16$), by Lemma \ref{lm3}, $H_j$ is an edge
bipancyclic graph (see Figure $5$).

As $V(Q_7 - D) = \bigcup^{16}_{j=1} V(G_j)$. (Also, it is well known
 that $Q_n$ for any $n \geq 1$ has a Hamiltonian path between every two vertices of opposite
 parity\cite{gs} so we can take some other Hamiltonian
 cycle in $Q_4$ instead of cycle $t_1t_2 \ldots t_{16}t_1$
 but due to bipartiteness of
hypercubes the cross edges $E_i$ will in between $G_j$ (for $j$
odd only) and $G_k$ (for $k$ even only) for some $1\leq j,k \leq
16$.) Hence, it is sufficient to prove that $H_1 \bigcup E_2
\bigcup H_3$ is edge-bipancyclic .

By using Lemma \ref{lm6}, we observe that every Hamiltonian cycle in
$H_1$ contains at least two edges of $G_2$. And by using Lemma
\ref{lm7}, we observe that every Hamiltonian cycle $C$ in $H_3$
contains at least two edges of $G_3$ which corresponds to the
edges of $G_2$.

Now by using Lemma \ref{lm5}, we observe that $H_1 \bigcup E_2 \bigcup
H_3$ is edge-bipancyclic.

Thus we proved the existence of linear perfect independent
dominating set of $Q_7$ such that
$Q_7 - D$ is edge-bipancyclic.\\

By Lemma \ref{lm8} it is sufficient what we proved that there exists a
linear perfect independent dominating set $D$ of $Q_7$ such that
$Q_7 - D$ is
edge-bipancyclic.~\\
\begin{center}
%TeXCAD Picture [Fig p5-4.pic]. Options:
%\grade{\on}
%\emlines{\off}
%\epic{\off}
%\beziermacro{\on}
%\reduce{\on}
%\snapping{\off}
%\pvinsert{% Your \input, \def, etc. here}
%\quality{8.000}
%\graddiff{0.005}
%\snapasp{1}
%\zoom{4.0000}
\unitlength 1mm % = 2.845pt
\linethickness{0.4pt}
\ifx\plotpoint\undefined\newsavebox{\plotpoint}\fi % GNUPLOT compatibility
\begin{picture}(101.008,61.25)(0,0)
\put(10.75,7.25){\circle{8.016}} \put(35.25,7.25){\circle{8.016}}
\put(59.5,7.25){\circle{8.016}} \put(97,7.25){\circle{8.016}}
\put(10.75,29){\circle{8.016}} \put(35.25,29){\circle{8.016}}
\put(59.5,29){\circle{8.016}} \put(97,29){\circle{8.016}}
\put(8.25,25.5){\line(0,-1){15}} \put(32.75,25.5){\line(0,-1){15}}
\put(57,25.5){\line(0,-1){15}} \put(94.5,25.5){\line(0,-1){15}}
\put(8.25,10.5){\line(0,1){0}} \put(32.75,10.5){\line(0,1){0}}
\put(57,10.5){\line(0,1){0}} \put(94.5,10.5){\line(0,1){0}}
\put(10.5,24.5){\line(0,-1){13.75}}
\put(35,24.5){\line(0,-1){13.75}}
\put(59.25,24.5){\line(0,-1){13.75}}
\put(96.75,24.5){\line(0,-1){13.75}}
\put(10.5,4.75){\makebox(0,0)[cc]{\tiny{$(Q_3, {t_1})$}}}
\put(94.25,61.25){\makebox(0,0)[cc]{}}
\put(94.25,61.25){\makebox(0,0)[cc]{}}
\put(94.25,61.25){\makebox(0,0)[cc]{}}
\put(94.25,61.25){\makebox(0,0)[cc]{}}
\put(35.25,5){\makebox(0,0)[cc]{\tiny{$(Q_3, {t_3})$}}}
\put(59.25,5.25){\makebox(0,0)[cc]{\tiny{$(Q_3, {t_5})$}}}
\put(96.75,5){\makebox(0,0)[cc]{\tiny{$(Q_3, {t_{15}})$}}}
%\emline(14.25,27.5)(31.5,5.75)
\multiput(14.25,27.5)(.0336914063,-.0424804688){512}{\line(0,-1){.0424804688}}
%\end
%\emline(14.75,28.75)(31.25,8)
\multiput(14.75,28.75)(.0336734694,-.0423469388){490}{\line(0,-1){.0423469388}}
%\end
%\emline(14,26.75)(13.75,27)
\multiput(14,26.75)(-.03125,.03125){8}{\line(0,1){.03125}}
%\end
%\emline(38.5,26.5)(55.75,5.25)
\multiput(38.5,26.5)(.0336914063,-.0415039063){512}{\line(0,-1){.0415039063}}
%\end
%\emline(39,28)(55.5,7.5)
\multiput(39,28)(.0336734694,-.0418367347){490}{\line(0,-1){.0418367347}}
%\end
%\dashline{1}(65.5,29)(89.75,29)
\put(65.43,28.93){\line(1,0){.97}}
\put(67.37,28.93){\line(1,0){.97}}
\put(69.31,28.93){\line(1,0){.97}}
\put(71.25,28.93){\line(1,0){.97}}
\put(73.19,28.93){\line(1,0){.97}}
\put(75.13,28.93){\line(1,0){.97}}
\put(77.07,28.93){\line(1,0){.97}}
\put(79.01,28.93){\line(1,0){.97}}
\put(80.95,28.93){\line(1,0){.97}}
\put(82.89,28.93){\line(1,0){.97}}
\put(84.83,28.93){\line(1,0){.97}}
\put(86.77,28.93){\line(1,0){.97}}
\put(88.71,28.93){\line(1,0){.97}}
%\end
%\dashline{1}(65.25,19)(88.5,18.75)
\put(65.18,18.93){\line(1,0){.9688}}
\put(67.117,18.909){\line(1,0){.9688}}
\put(69.055,18.888){\line(1,0){.9688}}
\put(70.992,18.867){\line(1,0){.9688}}
\put(72.93,18.846){\line(1,0){.9688}}
\put(74.867,18.826){\line(1,0){.9688}}
\put(76.805,18.805){\line(1,0){.9688}}
\put(78.742,18.784){\line(1,0){.9688}}
\put(80.68,18.763){\line(1,0){.9688}}
\put(82.617,18.742){\line(1,0){.9688}}
\put(84.555,18.721){\line(1,0){.9688}}
\put(86.492,18.701){\line(1,0){.9688}}
%\end
%\dashline{1}(65.25,14)(87.5,13.5)
\put(65.18,13.93){\line(1,0){.9674}}
\put(67.114,13.886){\line(1,0){.9674}}
\put(69.049,13.843){\line(1,0){.9674}}
\put(70.984,13.799){\line(1,0){.9674}}
\put(72.919,13.756){\line(1,0){.9674}}
\put(74.854,13.712){\line(1,0){.9674}}
\put(76.788,13.669){\line(1,0){.9674}}
\put(78.723,13.625){\line(1,0){.9674}}
\put(80.658,13.582){\line(1,0){.9674}}
\put(82.593,13.538){\line(1,0){.9674}}
\put(84.528,13.495){\line(1,0){.9674}}
\put(86.462,13.451){\line(1,0){.9674}}
%\end
%\dashline{1}(65.75,9.25)(89,9.25)
\put(65.68,9.18){\line(1,0){.9688}}
\put(67.617,9.18){\line(1,0){.9688}}
\put(69.555,9.18){\line(1,0){.9688}}
\put(71.492,9.18){\line(1,0){.9688}}
\put(73.43,9.18){\line(1,0){.9688}}
\put(75.367,9.18){\line(1,0){.9688}}
\put(77.305,9.18){\line(1,0){.9688}}
\put(79.242,9.18){\line(1,0){.9688}}
\put(81.18,9.18){\line(1,0){.9688}}
\put(83.117,9.18){\line(1,0){.9688}}
\put(85.055,9.18){\line(1,0){.9688}}
\put(86.992,9.18){\line(1,0){.9688}}
%\end
\put(54,-6){\makebox(0,0)[cc]{Figure $4$}}
%\emline(63.25,27.5)(74.5,8)
\multiput(63.25,27.5)(.0336826347,-.0583832335){334}{\line(0,-1){.0583832335}}
%\end
%\emline(63.5,29)(75.75,8.75)
\multiput(63.5,29)(.0336538462,-.0556318681){364}{\line(0,-1){.0556318681}}
%\end
%\emline(84,29)(93,8.5)
\multiput(84,29)(.0337078652,-.0767790262){267}{\line(0,-1){.0767790262}}
%\end
%\emline(82.5,29.25)(92.75,6.75)
\multiput(82.5,29.25)(.0337171053,-.0740131579){304}{\line(0,-1){.0740131579}}
%\end
%\emline(14.25,9.25)(93.25,28)
\multiput(14.25,9.25)(.1420863309,.0337230216){556}{\line(1,0){.1420863309}}
%\end
%\emline(14.5,7.75)(93.75,27)
\multiput(14.5,7.75)(.1387915937,.0337127846){571}{\line(1,0){.1387915937}}
%\end
\put(10.5,29){\makebox(0,0)[cc]{\tiny{$G_2$}}}
\put(35,30.25){\makebox(0,0)[cc]{\tiny{$G_4$}}}
\put(59.75,29){\makebox(0,0)[cc]{\tiny{$G_6$}}}
\put(96.75,29){\makebox(0,0)[cc]{\tiny{$G_{16}$}}}
\put(11.75,19){\makebox(0,0)[cc]{\tiny{$E_1$}}}
\put(23.5,19.5){\makebox(0,0)[cc]{\tiny{$E_2$}}}
\put(36.25,19.75){\makebox(0,0)[cc]{\tiny{$E_3$}}}
\put(45,21.75){\makebox(0,0)[cc]{\tiny{$E_4$}}}
\put(75.25,25){\makebox(0,0)[cc]{\tiny{$E_{16}$}}}
\end{picture}
\end{center}~\\

\begin{center}
%TeXCAD Picture [edge bipancyclicity 2.pic]. Options:
%\grade{\on}
%\emlines{\off}
%\epic{\off}
%\beziermacro{\on}
%\reduce{\on}
%\snapping{\off}
%\pvinsert{% Your \input, \def, etc. here}
%\quality{8.000}
%\graddiff{0.005}
%\snapasp{1}
%\zoom{4.0000}
\unitlength 1mm % = 2.845pt
\linethickness{0.4pt}
\ifx\plotpoint\undefined\newsavebox{\plotpoint}\fi % GNUPLOT compatibility
\begin{picture}(109.5,61.25)(0,0)
\put(16.5,12.5){\circle{8.016}} \put(41,12.5){\circle{8.016}}
\put(65.25,12.5){\circle{8.016}} \put(102.75,12.5){\circle{8.016}}
\put(16.5,34.25){\circle{8.016}} \put(41,34.25){\circle{8.016}}
\put(65.25,34.25){\circle{8.016}}
\put(102.75,34.25){\circle{8.016}} \put(14,30.75){\line(0,-1){15}}
\put(38.5,30.75){\line(0,-1){15}}
\put(62.75,30.75){\line(0,-1){15}}
\put(100.25,30.75){\line(0,-1){15}} \put(14,15.75){\line(0,1){0}}
\put(38.5,15.75){\line(0,1){0}} \put(62.75,15.75){\line(0,1){0}}
\put(100.25,15.75){\line(0,1){0}}
\put(16.25,29.75){\line(0,-1){13.75}}
\put(40.75,29.75){\line(0,-1){13.75}}
\put(65,29.75){\line(0,-1){13.75}}
\put(102.5,29.75){\line(0,-1){13.75}}
\put(94.25,61.25){\makebox(0,0)[cc]{}}
\put(94.25,61.25){\makebox(0,0)[cc]{}}
\put(94.25,61.25){\makebox(0,0)[cc]{}}
\put(94.25,61.25){\makebox(0,0)[cc]{}}
%\emline(20,32.75)(37.25,11)
\multiput(20,32.75)(.0336914063,-.0424804688){512}{\line(0,-1){.0424804688}}
%\end
%\emline(20.5,34)(37,13.25)
\multiput(20.5,34)(.0336734694,-.0423469388){490}{\line(0,-1){.0423469388}}
%\end
%\emline(19.75,32)(19.5,32.25)
\multiput(19.75,32)(-.03125,.03125){8}{\line(0,1){.03125}}
%\end
%\emline(44.25,31.75)(61.5,10.5)
\multiput(44.25,31.75)(.0336914063,-.0415039063){512}{\line(0,-1){.0415039063}}
%\end
%\emline(44.75,33.25)(61.25,12.75)
\multiput(44.75,33.25)(.0336734694,-.0418367347){490}{\line(0,-1){.0418367347}}
%\end
%\dashline{1}(71.25,34.25)(95.5,34.25)
\put(71.18,34.18){\line(1,0){.97}}
\put(73.12,34.18){\line(1,0){.97}}
\put(75.06,34.18){\line(1,0){.97}} \put(77,34.18){\line(1,0){.97}}
\put(78.94,34.18){\line(1,0){.97}}
\put(80.88,34.18){\line(1,0){.97}}
\put(82.82,34.18){\line(1,0){.97}}
\put(84.76,34.18){\line(1,0){.97}}
\put(86.7,34.18){\line(1,0){.97}}
\put(88.64,34.18){\line(1,0){.97}}
\put(90.58,34.18){\line(1,0){.97}}
\put(92.52,34.18){\line(1,0){.97}}
\put(94.46,34.18){\line(1,0){.97}}
%\end
%\dashline{1}(71,24.25)(94.25,24)
\put(70.93,24.18){\line(1,0){.9688}}
\put(72.867,24.159){\line(1,0){.9688}}
\put(74.805,24.138){\line(1,0){.9688}}
\put(76.742,24.117){\line(1,0){.9688}}
\put(78.68,24.096){\line(1,0){.9688}}
\put(80.617,24.076){\line(1,0){.9688}}
\put(82.555,24.055){\line(1,0){.9688}}
\put(84.492,24.034){\line(1,0){.9688}}
\put(86.43,24.013){\line(1,0){.9688}}
\put(88.367,23.992){\line(1,0){.9688}}
\put(90.305,23.971){\line(1,0){.9688}}
\put(92.242,23.951){\line(1,0){.9688}}
%\end
%\dashline{1}(71,19.25)(93.25,18.75)
\put(70.93,19.18){\line(1,0){.9674}}
\put(72.864,19.136){\line(1,0){.9674}}
\put(74.799,19.093){\line(1,0){.9674}}
\put(76.734,19.049){\line(1,0){.9674}}
\put(78.669,19.006){\line(1,0){.9674}}
\put(80.604,18.962){\line(1,0){.9674}}
\put(82.538,18.919){\line(1,0){.9674}}
\put(84.473,18.875){\line(1,0){.9674}}
\put(86.408,18.832){\line(1,0){.9674}}
\put(88.343,18.788){\line(1,0){.9674}}
\put(90.278,18.745){\line(1,0){.9674}}
\put(92.212,18.701){\line(1,0){.9674}}
%\end
%\dashline{1}(71.5,14.5)(94.75,14.5)
\put(71.43,14.43){\line(1,0){.9688}}
\put(73.367,14.43){\line(1,0){.9688}}
\put(75.305,14.43){\line(1,0){.9688}}
\put(77.242,14.43){\line(1,0){.9688}}
\put(79.18,14.43){\line(1,0){.9688}}
\put(81.117,14.43){\line(1,0){.9688}}
\put(83.055,14.43){\line(1,0){.9688}}
\put(84.992,14.43){\line(1,0){.9688}}
\put(86.93,14.43){\line(1,0){.9688}}
\put(88.867,14.43){\line(1,0){.9688}}
\put(90.805,14.43){\line(1,0){.9688}}
\put(92.742,14.43){\line(1,0){.9688}}
%\end
\put(54,-6){\makebox(0,0)[cc]{Figure $5$}}
%\emline(69,32.75)(80.25,13.25)
\multiput(69,32.75)(.0336826347,-.0583832335){334}{\line(0,-1){.0583832335}}
%\end
%\emline(69.25,34.25)(81.5,14)
\multiput(69.25,34.25)(.0336538462,-.0556318681){364}{\line(0,-1){.0556318681}}
%\end
%\emline(89.75,34.25)(98.75,13.75)
\multiput(89.75,34.25)(.0337078652,-.0767790262){267}{\line(0,-1){.0767790262}}
%\end
%\emline(88.25,34.5)(98.5,12)
\multiput(88.25,34.5)(.0337171053,-.0740131579){304}{\line(0,-1){.0740131579}}
%\end
%\emline(20,14.5)(99,33.25)
\multiput(20,14.5)(.1420863309,.0337230216){556}{\line(1,0){.1420863309}}
%\end
%\emline(20.25,13)(99.5,32.25)
\multiput(20.25,13)(.1387915937,.0337127846){571}{\line(1,0){.1387915937}}
%\end
\put(16.25,34.25){\makebox(0,0)[cc]{\tiny{$G_2$}}}
\put(40.75,35.5){\makebox(0,0)[cc]{\tiny{$G_4$}}}
\put(65.5,34.25){\makebox(0,0)[cc]{\tiny{$G_6$}}}
\put(102.5,34.25){\makebox(0,0)[cc]{\tiny{$G_{16}$}}}
\put(17.5,24.25){\makebox(0,0)[cc]{\tiny{$E_1$}}}
\put(29.25,24.75){\makebox(0,0)[cc]{\tiny{$E_2$}}}
\put(42,25){\makebox(0,0)[cc]{\tiny{$E_3$}}}
\put(50.75,27){\makebox(0,0)[cc]{\tiny{$E_4$}}}
\put(81,30.25){\makebox(0,0)[cc]{\tiny{$E_{16}$}}}
\put(8.75,6.5){\line(0,1){36.5}} \put(34.25,6.5){\line(0,1){36.5}}
\put(58.5,6.5){\line(0,1){36.5}} \put(96.25,6.5){\line(0,1){36.5}}
\put(9,43.25){\line(1,0){13}} \put(34.5,43.25){\line(1,0){13}}
\put(58.75,43.25){\line(1,0){13}} \put(96.5,43.25){\line(1,0){13}}
\put(22,43.25){\line(0,-1){36.75}}
\put(47.5,43.25){\line(0,-1){36.75}}
\put(71.75,43.25){\line(0,-1){36.75}}
\put(109.5,43.25){\line(0,-1){36.75}} \put(22,6.5){\line(0,1){0}}
\put(21.75,6.5){\line(-1,0){13}} \put(47.25,6.5){\line(-1,0){13}}
\put(71.5,6.5){\line(-1,0){13}} \put(109.25,6.5){\line(-1,0){13}}
\put(14.5,46.25){\makebox(0,0)[cc]{\tiny{$H_1$}}}
\put(40,46.75){\makebox(0,0)[cc]{\tiny{$H_3$}}}
\put(64.75,46.5){\makebox(0,0)[cc]{\tiny{$H_5$}}}
\put(102.75,47.75){\makebox(0,0)[cc]{\tiny{$H_{15}$}}}
\put(16.5,12.5){\makebox(0,0)[cc]{\tiny{$G_1$}}}
\put(40.75,12.5){\makebox(0,0)[cc]{\tiny{$G_3$}}}
\put(65.25,12.5){\makebox(0,0)[cc]{\tiny{$G_5$}}}
\put(103,12.25){\makebox(0,0)[cc]{\tiny{$G_{15}$}}}
\end{picture}
\end{center}~\\

\textbf{Part II} :  Now we prove the result by induction on $m$.
$Q_7 - D'$ is edge-bipancyclic shows our statement is true for $m
= 3$ (that is $n = 2^3 - 1 = 7$). Suppose the statement is true
for $m = r \geq 3$. Let $n = 2^r - 1$, therefore, $Q_n - D'$ is
edge-bipancyclic where $D'$ denotes any linear PID set or its coset in $Q_n$. \\
We want to show that the statement is true for $m = r+1$. Let $k =
2^{r+1} -1= n+(n+1)$.\\

We decompose $Q_k$ as $Q_k = Q_{n} \times Q_{n+1}$. Let $C =
t_1t_2t_3 \ldots t_{2^{n+1}}t_1$ be a Hamiltonian cycle in $Q_{n+1}$.
Without loss of generality say $t_j$ is of odd parity, for odd
integer $j$, $1 \leq j \leq 2^{n+1}$. Obviously, $t_j$ is of even
parity, for even integer $j$, $1 \leq j \leq 2^{n+1}$.\\

Let $D'_0$ be the Kernel of matrix $H'$
and let $D'_i = D'_0 + e_i$ (for $1 \leq i \leq n$, $e_i =
(0,\ldots,1,\ldots,0)$ $1$ is at the $i^{th}$ position) be its cosets. Now by
lemma \ref{l9} and observation \ref{ob1} , we get the kernel of $H$ say $D$ such that
for odd integer $j$, $D \cap (Q_n, {t_j}) = \phi$ and for even
integer $j$, $D \cap (Q_n, {t_j}) =(D', {t_j})$ where $D' \in
\{D'_0,\ldots,D'_n\}$ ($1 \leq j \leq 2^{n+1}$).\\

We claim that $Q_k - D$ is edge-bipancyclic.\\

Let us denote by $G_j =(Q_n, {t_j}) - ( D \cap (Q_n, {t_j}) )$ ($1
\leq j \leq 2^{n+1}$). For every even integer $j$ ($1 \leq j \leq
2^{n+1}$), $G_j$ is isomorphic to one of the $G'_i$ ($G'_i = Q_n -
D'_i$) and by induction $G'_i$ is an edge-bipancyclic graph ($0
\leq i \leq n$). For odd integer $j$ ($1 \leq j \leq 2^{n+1}$),
$G_j = (Q_n, {t_j})$ is isomorphic to $Q_n$.

Let us denote by $E_j$ the set of cross edges between $G_j$ and
$G_{j+1}$ ($1 \leq j \leq {2^{n+1} - 1}$), therefore $E_j = \{e :$
one end vertex of an edge $e$ is in  $G_l, l$ is an even integer
with $l \in \{j, j+1\} \}$.

By $(b)$ part of Lemma \ref{lm4}, we get $H_j = G_j \bigcup E_j
\bigcup G_{j+1}$ ($j$ is an odd integer, $1 \leq j \leq 2^{n+1}$),
is an edge bipancyclic graph.

As $V(Q_k - D) = \bigcup^{2^{n+1}}_{j=1} V(G_j)$. It is sufficient
to prove that $H_1 \bigcup E_2 \bigcup H_3$ is edge-bipancyclic.

By using Lemma \ref{lm6}, we observe that every Hamiltonian cycle in
$H_1$ contains at least two edges of $G_2$. And by using Lemma
\ref{lm7}, we observe that every Hamiltonian cycle $C$ in $H_3$
contains at least two edges of $G_3$ which corresponds to the
edges of $G_2$.

Now by using Lemma \ref{lm5}, we observe that $H_1 \bigcup E_2
\bigcup H_3$ is edge-bipancyclic.

Thus we proved that $Q_k - D$ is edge-bipancyclic for any linear
perfect independent
dominating set or its coset $D$ in $Q_k$.\\
\end{proof}

\section{Connectivity with Distant Faulty Vertices}

In this section, we study the connectivity of hypercubes by removing a restricted set of faulty vertices say $F$ such that each pair of faulty vertices is distant. Here 'distant' or 'strongly independent' means that the distance between two vertices $u$ and $v$ is greater than or equal to $3$, $d(u,v)\geq 3$. Clearly, each non-faulty vertex is adjacent to at most one faulty vertex. It is proved that (see\cite{gs}) the set $F$ can have up to $2^{n-1}$ vertices in $Q_n$.\\  
Here, we prove that
for strongly independent set $F$ of $Q_n$, $Q_n - F$ is
$(n-1)$-connected for $n \geq 3$.

It is important to see that, which vertex faulty hypercubes maintain
the property like connectivity. In the following theorem, we prove
the existence of such vertex faulty hypercubes.

\begin{thm}\label{tm3} For $n \geq 3$, let $F$ be a strongly independent set in $Q_n$ then $Q_n - F$ is
	$(n-1)$-connected.\end{thm}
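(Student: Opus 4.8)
The plan is to show that for any strongly independent (distant) set $F$ in $Q_n$ with $n \geq 3$, removing any additional set $S$ of at most $n-2$ vertices from $Q_n - F$ leaves a connected graph; equivalently, no separating set of size $\leq n-2$ exists in $Q_n - F$. I would argue by induction on $n$, using the standard decomposition $Q_n = (Q_{n-1},0) \cup (Q_{n-1},1) \cup R$ from the Preliminaries, where $R$ is the perfect matching of cross edges. The base case $n=3$ asks that $Q_3 - F$ be $2$-connected; since $F$ is strongly independent in $Q_3$ we have $|F| \leq 2$ (any two faulty vertices are at distance $\geq 3$, and $Q_3$ has diameter $3$), and this can be checked directly by inspecting the few possible configurations.

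For the inductive step, fix a separating set $S$ with $|S| \leq n-2$ and derive a contradiction. Write $F_i = F \cap V(Q_{n-1},i)$ and $S_i = S \cap V(Q_{n-1},i)$ for $i \in \{0,1\}$. The key structural fact is that $F_i$ is again strongly independent inside $(Q_{n-1},i) \cong Q_{n-1}$, so by the inductive hypothesis each $(Q_{n-1},i) - F_i$ is $(n-2)$-connected. First I would handle the case where all of $S$ lies in one subcube, say $S \subseteq V(Q_{n-1},0)$: then $(Q_{n-1},1) - F_1$ is untouched and remains $(n-2)$-connected, hence connected (as $n-1 \geq 3$), and I would show every surviving vertex of the $0$-side connects across a cross edge in $R$ to the $1$-side. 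The obstruction here is that a vertex $u$ in the $0$-subcube might have its cross-neighbor either faulty (in $F_1$) or deleted (in $S_1 = \emptyset$ in this case) — but since each non-faulty vertex is adjacent to at most one faulty vertex (distance $\geq 3$ forces this), and the cross edge is just one edge, I would use the remaining $n-1$ in-subcube neighbors to reroute.

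In the genuinely split case, with $S_0, S_1$ both nonempty, I would use $|S_0| + |S_1| \leq n-2$ so that $|S_i| \leq n-3 < n-2$ for each $i$; thus each $(Q_{n-1},i) - F_i - S_i$ remains connected by the inductive hypothesis. It then suffices to show at least one surviving cross edge in $R$ joins the two connected pieces. The number of cross edges is $2^{n-1}$, and the number that are destroyed is bounded by the vertices removed or rendered faulty on either side, namely at most $|S_0| + |S_1| + |F_0| + |F_1|$ relevant endpoints; since $F$ is distant the faulty vertices on the two sides cannot simultaneously kill the same cross edge through adjacency, so a counting argument shows a usable cross edge survives. The main obstacle I anticipate is precisely this last cross-edge counting step: one must carefully bound how many of the $2^{n-1}$ matching edges become unusable — an edge is lost only if one endpoint is in $S$ or is faulty, and I would exploit the distance-$\geq 3$ condition (ensuring faulty vertices are spread out so their cross-neighbors are non-faulty) together with $|S| \leq n-2$ to guarantee $2^{n-1}$ strictly exceeds the number of blocked edges for all $n \geq 3$. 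A cleaner alternative worth trying is a Menger-type argument: exhibit $n-1$ internally disjoint paths between any two surviving vertices directly, again routing most paths inside the subcubes and using the inductive connectivity together with the sparsity of $F$ to avoid the faulty vertices.
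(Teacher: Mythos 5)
Your proposal follows essentially the same route as the paper's own proof: induction on $n$ with the two-subcube decomposition $Q_n = (Q_{n-1},0)\cup (Q_{n-1},1)\cup R$, the base case $n=3$ by inspection, the same case split on whether $S$ lies in one subcube or meets both (giving $|S_i|\leq n-3$ and connectivity of $G_i - S_i$ by induction), the same counting of surviving matching edges (at most $|F|+|S|\leq 2^{n-2}+(n-2) < 2^{n-1}$ cross edges are blocked, since strongly independent sets satisfy $|F|\leq 2^n/(n+1)$), and the same key use of strong independence to show that a vertex whose cross-neighbor is faulty can still reach the other side. The only cosmetic difference is that in the one-subcube case you reroute vertex-by-vertex through a surviving subcube neighbor, whereas the paper argues at the level of components of $G_0 - S$ using the $(n-1)$-connectivity of the intact subcube $(Q_{n-1},0)$; both versions hinge on the identical observation that no non-faulty vertex has two faulty neighbors.
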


\begin{proof} We prove the theorem by induction on $n$. \\
	In $Q_3$, a strongly independent set $F$ is a single vertex in $Q_3$ or
	$F = \{(000),(111)\}$ or $F = \{(100),(011)\}$ or $F =
	\{(010),(101)\}$ or $F = \{(001),(110)\}$. We can easily observe
	that $Q_3 - F$ is $2-$connected graph, see Figure $1$. Thus result holds for $n=3$.\\
	Suppose $n \geq 4$. We assume the result for $n-1$. Consider the subgraphs $(Q_{n-1}, {0})$ and $(Q_{n-1}, {1})$ of $Q_n$ and
	the set $R$ of cross edges between them. So, $Q_n = (Q_{n-1}, {0})\cup
	(Q_{n-1}, {1})\cup R$. Let $F$ be a strongly independent set of $Q_n$ and let $G= Q_n - F$. Let us denote
	by $F_0$ and $F_1$ the strongly independent sets $V(Q_{n-1}, {0})
	\bigcap F$ and $V(Q_{n-1}, {1}) \bigcap F$ respectively. It is easy to observe that $G= G_0 \cup G_1 \cup R'$ where $G_0 = (Q_{n-1}, {0}) - F_0, G_1 = (Q_{n-1}, {1}) - F_1 $ and $R' \subset R$ such that
	 $R' = \{ \langle u, v \rangle = uv : u \in V(G_0)$ and $v \in V(G_1) \} $ 
	Without loss let $|V(G_0)|=k \leq |V(G_1)|=t $ (observe that $2^{n-2} \leq k \leq t$), we can label the vertices of $G_0$ using the set $\{u_1, u_2, u_3,.....,u_k\}$ and $G_1$ using the set $\{v_1, v_2, v_3,.....,v_t\}$, in such a way that $R'= u_1v_1, u_2v_2,u_3v_3,....u_mv_m$, where $2^{n-2} \leq m \leq k$.\\
	By induction hypothesis,
	$G_0 $ and $G_1$ are $(n-2)-$connected.\\
	Claim :  $G = Q_n - F$ is $(n-1)-$connected.\\
	Let $S \subset V(G)$ with $|S| = n-2$. It suffices to prove that $G-S$ is connected.\\
There are at least $2^{n-2} - |S| = 2^{n-2} - (n-2) \geq 1$ edges from the set $R'$ which join $G_0$ to $G_1$ in $G-S$   
Suppose that $S \subset V(G_0)$ or $S \subset V(G_1)$. Without loss, we assume $S \subset V(G_0)$. If $G_0 - S$ is connected or each component of $G_0 - S$ has a neighbor in $G_1$, then $G-S$ is connected because $G_1$ is connected. Assume that $G_0 - S = (Q_{n-1}, {0}) - F_0 - S$ is disconnected and has a component $W$ having no neighbor in $G_1$. As $(Q_{n-1}, {0})$ is $(n-1)-$connected, $(Q_{n-1}, {0}) - S$ is connected. Therefore, $W$ contains at least one vertex say $u_i$ which is adjacent to vertex from $F_0$ only, say $u_{\sigma({i})}$. As $u_{\sigma({i})} \in F_0$ therefore corresponding vertex of $u_i$ from $(Q_{n-1}, {1})$ say $v_i \notin F_1$ so $v_i \in V(G_1)$ . Hence, $W$ is joined to the connected graph $G_1$ by an edge $\langle u_i, v_i \rangle \in R'$.\\
Suppose, instead that $S$ intersect both $V(G_0)$ and $V(G_1)$. Let $S_0 = V(G_0) \bigcap S$ and $S_1 = V(G_1) \bigcap S$. Then $S = S_0 \cup S_1$ and $ S_0 \cap S_1 \neq \phi$. Since $|S| \leq {n-2}$, $|S_0| \leq {n-3}$ and $|S_1| \leq {n-3}$. Hence, both $G_0 - S_0$ and $G_1 - S_1$ are connected because $G_0$ and $G_1$ are $(n-2)$-connected. Further $G_0 - S_0$ and $G_1 - S_1$ are joined to each other by at least one edge of $R'$ in $G-S$. Therefore, $G-S$ is connected. Thus $G$ is $(n-1)-$connected.               
\end{proof}

\section{Connectivity and Bipancyclicity with Faulty Vertices in Dominating Set.}

 We are interested in perfect dominating sets because
 their removal results in a subgraph in which each vertex has degree one less than in the original graph.
 In this section, we prove the existence of perfect dominating set $D$ of a
hypercube $Q_n$($n \geq 3$) such that $Q_n - D$ is
$(n-1)$-regular, $(n-1)$-connected and bipancyclic.\\
By using Theorem \ref{tm2} and Theorem \ref{tm3}, we obtain the following
Theorem which states that the Hamming shell is regular, connected
and edge-bipancyclic.

\begin{thm}\label{tm4}
 For $n = 2^k -1$, $k \geq 2$, let $D$ denote a perfect independent dominating set in $Q_n$. Then
$Q_n - D$ is $(n-1)$-regular and $(n-1)$-connected and in addition,
if $D$ is any coset of any linear perfect independent dominating
set of $Q_n$ then $Q_n - D$ Hamiltonian for $k = 2$ and
edge-bipancyclic for $k \geq 3$.
 Moreover, this is the smallest set satisfying the above property.
 \end{thm}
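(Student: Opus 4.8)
The plan is to read Theorem~\ref{tm4} as a synthesis of the structural results already established, handling its four assertions --- $(n-1)$-regularity, $(n-1)$-connectivity, the cycle structure, and minimality --- one at a time. First I would dispose of regularity directly from the definition of a perfect independent dominating set. Since $D$ perfectly dominates $Q_n$, every vertex $v\in V(Q_n)\setminus D$ is dominated by exactly one codeword and hence has precisely one neighbour in $D$; since $D$ is independent, no edge of $Q_n$ has both ends in $D$. Deleting $D$ therefore removes exactly one edge at each surviving vertex, so $\deg_{Q_n-D}(v)=n-1$ for every such $v$, giving $(n-1)$-regularity.

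Next I would obtain $(n-1)$-connectivity by invoking Theorem~\ref{tm3}. The key observation is that a Hamming code has minimum distance $3$, so any two distinct codewords $u,v\in D$ satisfy $d(u,v)\geq 3$, equivalently $N[u]\cap N[v]=\phi$; thus $D$ is strongly independent (distant) in the sense of the preliminaries. Since $n=2^k-1\geq 3$ whenever $k\geq 2$, applying Theorem~\ref{tm3} with $F=D$ shows at once that $Q_n-D$ is $(n-1)$-connected.

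For the cycle-structure claim I would split on the value of $k$. When $k=2$ we have $n=3$, and Lemma~\ref{lm2} already guarantees that $Q_3-D$ is Hamiltonian for every PID set $D$, in particular for any coset of a linear PID set. When $k\geq 3$ we have $n=2^k-1\geq 7$, and Theorem~\ref{tm2} gives directly that $Q_n-D$ is edge-bipancyclic whenever $D$ is a coset of a linear PID set (edge-bipancyclicity in turn implies bipancyclicity and Hamiltonicity). This settles the third assertion.

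Finally, for minimality I would argue via the domination (sphere-covering) bound. If $Q_n-D$ is to be $(n-1)$-regular then every vertex outside $D$ must lose exactly one neighbour, so each such vertex is dominated by a vertex of $D$; in particular $D$ must be a dominating set. Each vertex of a dominating set covers at most its closed neighbourhood of $n+1=2^k$ vertices, so every dominating set has size at least $2^n/(n+1)=2^{\,n-k}$, with equality exactly for perfect codes. A PID set attains this minimum, so it is the smallest set whose removal can yield an $(n-1)$-regular (hence connected, bipancyclic) graph. I expect the only delicate points to be checking that the hypotheses of the borrowed results genuinely hold --- the strong independence of $D$, which rests on the minimum distance $3$ of Hamming codes, and the matching of the parameter ranges $n=2^k-1$ with the constraints $n\geq 3$ and $m\geq 3$ in Theorems~\ref{tm3} and~\ref{tm2}. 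The substantive combinatorial labour having already been carried out in those theorems, the main theorem itself requires no new construction, only this careful assembly.
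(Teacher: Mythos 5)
Your proposal is correct and follows essentially the same route as the paper's own proof: regularity from the defining property of a PID set, $(n-1)$-connectivity via Theorem~\ref{tm3} after observing that independence plus perfect domination forces $N[u]\cap N[v]=\phi$ for distinct $u,v\in D$, the cycle structure from Theorem~\ref{tm2}, and minimality by the sphere-covering count $|D'|(n+1)\geq 2^n=2^{n-k}(n+1)$, which the paper states contrapositively (a set of size $2^{n-k}-1$ leaves at least $n+1$ vertices undominated, hence of full degree $n$). If anything you are slightly more careful than the paper: for $k=2$ the paper cites Theorem~\ref{tm2}, whose hypothesis requires $m\geq 3$, whereas your appeal to Lemma~\ref{lm2} for the Hamiltonicity of $Q_3-D$ is the correct reference.
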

\begin{proof} A perfect independent
dominating set $D$ in $Q_n$ for $n = 2^k -1$,
 $k \geq 2$ is a strongly independent set in $Q_n$ and hence $Q_n - D$ is $(n-1)$-regular where $|D|= 2^{n-k}$.
 Also by Theorem \ref{tm3}, $Q_n - D$ is $(n-1)$-connected.\\
If $D$ is any coset of any linear perfect independent dominating
set of $Q_n$ then $Q_n - D$ is Hamiltonian for $k = 2$ and
edge-bipancyclic for $k \geq 3$ (by Theorem \ref{tm2}). Thus we proved
the existence of perfect independent dominating set $D$ of $Q_n$
such that $Q_n - D$ is $(n-1)$-regular, $(n-1)$-connected and
Hamiltonian for $k = 2$ and edge-bipancyclic for $k \geq 3$.

 In case if we choose a set of vertices in $Q_n$ say $F_v$
 such that $|F_v|= 2^{n-k}-1$ then because of $n$-regularity of
 $Q_n$ exactly $(n+1)$ vertices remain unattended which shows $Q_n - F_v$ is non-regular, therefore $D$ is the
  smallest set satisfying the property above.\end{proof}

We need following lemmas before proving the main Theorem of this
section.

\begin{lem}[\cite{bd}\label{l1}] If $G$ is an
 $n$-regular graph, then a perfect dominating set of $G$ can be
 described as follows : If $H$ is an $(n-1)$-regular induced
 subgraph of $G$, then $V(G) - V(H)$ is a perfect dominating set of
 $G$.\end{lem}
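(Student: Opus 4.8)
The plan is to verify directly that the set $D = V(G) - V(H)$ satisfies the defining condition of a perfect dominating set, namely that every vertex lying outside $D$ is adjacent to exactly one vertex of $D$. The whole argument is a single degree count that plays the $n$-regularity of $G$ against the $(n-1)$-regularity of $H$, using crucially the fact that $H$ is an \emph{induced} subgraph.

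First I would fix an arbitrary vertex $v \in V(G) - D = V(H)$. By $n$-regularity of $G$, $v$ has exactly $n$ neighbors in $G$, and I split these into the neighbors lying in $V(H)$ and those lying in $D = V(G) - V(H)$. Because $H$ is induced, every edge of $G$ joining two vertices of $V(H)$ is already an edge of $H$; hence the neighbors of $v$ inside $V(H)$ coincide exactly with its neighbors in $H$, of which there are $\deg_H(v) = n-1$ by the $(n-1)$-regularity of $H$. Subtracting, $v$ has precisely $n - (n-1) = 1$ neighbor in $D$. Since $v \in V(H)$ was arbitrary, every vertex outside $D$ is adjacent to exactly one vertex of $D$; in particular $D$ is dominating and the domination is perfect. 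This is exactly the claimed description, so the lemma follows.

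I do not expect any substantial obstacle: the statement collapses to the one-line identity $n-(n-1)=1$. The one point that must be handled with genuine care is the invocation of the induced hypothesis, and it should be invoked at exactly the step where the neighbors of $v$ within $V(H)$ are identified with the neighbors of $v$ within $H$. Without that hypothesis, $v$ could in principle have further neighbors in $V(H)$ along edges of $G$ that are not retained in $H$, and then the number of neighbors of $v$ falling into $D$ would no longer be forced to equal $1$; the perfect-domination conclusion would fail. So the crux of the proof is simply to ensure that "$H$ induced" is used at precisely that place, after which the counting is immediate.
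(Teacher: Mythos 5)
Your proof is correct. Note that the paper itself gives no proof of this lemma at all --- it is quoted from Borges and Dejter \cite{bd} as a known result --- so there is nothing internal to compare against; your degree-counting argument (exactly $n-(n-1)=1$ neighbor of each $v\in V(H)$ falls in $V(G)-V(H)$, with the induced hypothesis invoked precisely to identify $G$-neighbors inside $V(H)$ with $H$-neighbors) is the standard proof of this fact and correctly supplies what the paper leaves to the reference.
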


 \begin{lem} [\cite{mw}\label{l2}] Let for $k \geq 2$, $P_k$ be denoted the path of length $k-1$.
 Then the mesh $P_m \times P_n$ ($m,n \geq 2$) contains a cycle of length $l$ for any even integer $l$ with $4 \leq l \leq mn$.\end{lem}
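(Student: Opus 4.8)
The final statement is Lemma~\ref{l2}, which asserts that the mesh $P_m \times P_n$ contains a cycle of every even length $l$ with $4 \leq l \leq mn$. Since this is cited to reference \cite{mw}, I will reconstruct a self-contained proof rather than quote it.

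\medskip

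\textbf{Proof proposal.} The plan is to induct on $\min(m,n)$, reducing a general mesh to a thin mesh of the form $P_2 \times P_n$ (a $2 \times n$ ladder), which serves as the base case. First I would handle the ladder $P_2 \times P_n$ directly: label its vertices $a_1,\dots,a_n$ (bottom row) and $b_1,\dots,b_n$ (top row), with rungs $a_ib_i$ and horizontal edges $a_ia_{i+1}$, $b_ib_{i+1}$. For each $j$ with $2 \leq j \leq n$, the ``box'' on columns $1$ through $j$, namely the cycle $a_1 a_2 \cdots a_j b_j b_{j-1} \cdots b_1 a_1$, has length exactly $2j$, so as $j$ ranges over $2,\dots,n$ we obtain every even length from $4$ to $2n = mn$. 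This settles the base case $m=2$.

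\medskip

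For the inductive step I would assume the claim for all meshes $P_{m'} \times P_{n'}$ with $\min(m',n') < \min(m,n)$ and prove it for $P_m \times P_n$ with $m \leq n$ and $m \geq 3$. The idea is to peel off the last column: write $P_m \times P_n = (P_m \times P_{n-1}) \cup C$, where $C$ is the final column (a copy of $P_m$) together with the $m$ edges joining it to column $n-1$. For small even lengths $4 \leq l \leq m(n-1)$, the induction hypothesis applied to the submesh $P_m \times P_{n-1}$ already supplies a cycle of length $l$, which lives inside $P_m \times P_n$. For the remaining lengths $m(n-1) < l \leq mn$, I would start from a Hamiltonian cycle of the submesh $P_m \times P_{n-1}$ (available since $m(n-1)$ is even when $m$ is even, or handled by choosing a convenient cycle otherwise) containing a horizontal edge $e$ incident to column $n-1$, then reroute $e$ through a suitable portion of the last column, absorbing the extra vertices two at a time to realize each needed even length. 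The cleanest bookkeeping uses the standard boustrophedon (snake) Hamiltonian path of $P_m \times P_n$ and ``short-circuits'' a prefix of it into a cycle, increasing the length in steps of $2$.

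\medskip

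The main obstacle will be the parity and reachability bookkeeping in the inductive step: a mesh $P_m \times P_n$ is bipartite, so it can only contain even cycles, and when $mn$ is odd (both $m,n$ odd) the longest cycle has length $mn-1$ rather than $mn$; I must confirm the stated range $4 \leq l \leq mn$ is consistent with this, since for odd $mn$ the value $l=mn$ is odd and hence excluded from ``even integer $l$'' anyway. The delicate part is guaranteeing that at each length I can extend by exactly $2$ without gaps, which I would secure by the snake-path construction: the snake visits the columns alternately top-to-bottom and bottom-to-top, and truncating it after an even number of vertices and closing it up yields cycles whose lengths increase by $2$ as the truncation point advances, covering the full even range. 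Once this monotone covering is established, the lemma follows by assembling the base ladder with the inductive extension.
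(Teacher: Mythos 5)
First, a point of comparison: the paper does not prove Lemma \ref{l2} at all --- it is imported as a citation from \cite{mw} --- so there is no internal proof to measure your attempt against; what follows assesses your argument on its own terms. Your base case (the ladder $P_2\times P_n$, with nested boxes giving lengths $4,6,\dots,2n$) is correct, and peeling off a line of the mesh is the standard strategy. But your induction is set up on $\min(m,n)$ while your reduction passes from $P_m\times P_n$ to $P_m\times P_{n-1}$: whenever $n-1\geq m$ this submesh has the \emph{same} value of $\min(m',n')$, so the induction hypothesis you stated simply does not apply to it. This is repairable (induct on the number of vertices $mn$ instead), but as written the inductive step is unsupported except when $m=n$.

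The more serious gap is the parity case $m$ odd, $n$ even. Then $P_m\times P_{n-1}$ has an odd number of vertices, hence, being bipartite, it has no Hamiltonian cycle; its longest cycle has only $m(n-1)-1$ vertices. Starting from such a cycle and absorbing last-column vertices two at a time reaches at most $m(n-1)-1+(m-1)=mn-2$, so the required even length $l=mn$ is never attained by your primary mechanism. Your fallback --- truncating the boustrophedon path after an even number of vertices and ``closing it up'' --- is not a valid construction: the two ends of a prefix of the snake are in general not adjacent, so there is nothing to close; making this rigorous requires a genuinely different cycle family (comb-shaped cycles), not a one-line appeal to the snake. A clean repair is to choose \emph{which} line to peel according to parity: remove a column if $m$ is even or $n$ is odd, and a row if $m$ is odd and $n$ is even, so that the remaining submesh always has an even number of vertices and therefore a Hamiltonian cycle; one then still has to exhibit that Hamiltonian cycle explicitly with $\lfloor p/2\rfloor$ pairwise disjoint edges along the peeled boundary ($p$ the length of the peeled line) --- an assumption your write-up also uses but never justifies --- after which the two-at-a-time rerouting covers exactly the missing lengths.
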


\begin{lem}[\cite{sp}\label{l3}] For any connected graphs, $G$
and $H$, $\kappa(G \times H) \geq \kappa(G) + \kappa(H)$
($\kappa(G)$ denotes connectivity of $G$).\end{lem}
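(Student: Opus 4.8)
There are two natural routes, and I would present the second in detail. The quick route observes that the bound is an immediate consequence of the known exact formula $\kappa(G\times H)=\min\{\kappa(G)\,|V(H)|,\ \kappa(H)\,|V(G)|,\ \delta(G)+\delta(H)\}$ for the connectivity of a Cartesian product: since $\delta\ge\kappa$ one has $\delta(G)+\delta(H)\ge\kappa(G)+\kappa(H)$, while $|V(H)|\ge\kappa(H)+1$ and $\kappa(G)\ge 1$ give $\kappa(G)\,|V(H)|\ge\kappa(G)\kappa(H)+\kappa(G)\ge\kappa(G)+\kappa(H)$, and symmetrically for the third term, so the minimum is at least $\kappa(G)+\kappa(H)$. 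For a self-contained argument I would instead work directly with the fibre structure. Write $a=\kappa(G)$ and $b=\kappa(H)$; it suffices to show that for every $S\subseteq V(G\times H)$ with $|S|\le a+b-1$ the graph $(G\times H)-S$ is connected. Recall that for each $h\in V(H)$ the $G$-fibre $G^{h}=\{(g,h):g\in V(G)\}$ induces a copy of $G$, and that adjacent fibres $G^{h},G^{h'}$ (with $hh'\in E(H)$) are joined by the perfect matching $\{(g,h)(g,h'):g\in V(G)\}$; thus $G\times H$ is $|V(H)|$ copies of $G$ glued along the edges of $H$.

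I would then set $s_{h}=|S\cap G^{h}|$, so that $\sum_{h}s_{h}=|S|\le a+b-1$, and call $G^{h}$ \emph{good} when $s_{h}\le a-1$ and \emph{bad} otherwise. Because $G$ is $a$-connected, $G^{h}-S$ is nonempty and connected for every good $h$. Letting $A$ be the set of bad indices, each bad fibre absorbs at least $a$ vertices of $S$, so $a|A|\le a+b-1$; assuming $a,b\ge 2$, this forces $|A|\le b-1<\kappa(H)$ (since $|A|\ge b$ would give $a|A|\ge ab>a+b-1$), whence $H-A$ is connected. The plan is now to fix one good fibre and show that every surviving vertex reaches it: for two good indices joined by a walk in $H-A$, the connected pieces $G^{h}-S$ along the walk should be stitched together through surviving matching rungs, and each surviving vertex of a bad fibre should likewise be attached to an adjacent good fibre by a rung that avoids $S$.

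The crux — and the reason this is a genuine theorem rather than a one-line count — is exactly this stitching: I must guarantee a matching rung between the relevant fibres that is not blocked by $S$. Between two fibres the number of usable rungs is at least $|V(G)|-s_{h}-s_{h'}$, which is positive precisely when $G$ is large compared with its connectivity. In the intended application this is free, since $G$ and $H$ are hypercubes with $|V(Q_{n})|=2^{n}$ far exceeding $2n$; but it can fail for small dense factors such as $K_{a+1}$, where $S$ could in principle cover every rung between two adjacent good fibres. I expect this to be the main obstacle, and I would resolve it by running the argument symmetrically in the $H$-direction whenever a $G$-link is saturated (rerouting through an $H$-fibre, or invoking Menger to produce an alternate $S$-avoiding path), and by treating the degenerate cases $a=1$ or $b=1$ separately, where the bad-fibre count is too weak and a direct connectivity argument is needed instead. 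The fibre set-up and the bad-fibre counting are routine; all the real work lies in this reconnection bookkeeping.
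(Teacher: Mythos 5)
The first thing to notice is that the paper does not prove this lemma at all: it is stated with the citation \cite{sp} (Spacapan, 2008) and used as a black box, so there is no internal proof to compare your attempt against. Your ``quick route'' is therefore on exactly the same footing as the paper's treatment --- the exact formula $\kappa(G\times H)=\min\{\kappa(G)\,|V(H)|,\ \kappa(H)\,|V(G)|,\ \delta(G)+\delta(H)\}$ \emph{is} the content of the cited reference, and your arithmetic deducing the inequality from it is correct (for factors on at least two vertices, so that $\kappa\geq 1$; the case $|V(G)|=1$ is trivial since then $G\times H\cong H$ and $\kappa(G)=0$). As a proof this is legitimate, but it is citation plus two lines, which is what the paper already does.

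The self-contained fibre argument, however, has a genuine gap, and it is exactly the one you flag yourself. The good/bad fibre count is fine when $a,b\geq 2$: at most $b-1$ bad fibres, so $H-A$ is connected and each good fibre survives as a nonempty connected piece. But to conclude, you must join the pieces sitting over adjacent good indices of $H-A$ by a matching rung avoiding $S$, and the only bound available is $|V(G)|-s_{h}-s_{h'}$, which can be zero even under your hypotheses: take $G=K_{4}$ (so $a=3$) and $b$ large, and let two adjacent good fibres each contain two vertices of $S$ whose projections to $G$ are four distinct vertices --- then all four rungs between these fibres are blocked although both fibres are ``good.'' Your proposed repairs (rerouting through the $H$-direction, invoking Menger, treating $a=1$ or $b=1$ separately) are sketches rather than arguments, and this reconnection step is precisely where the entire difficulty of the theorem resides; it is why the inequality is a cited research-level result rather than an exercise. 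So, taken as a self-contained proof the proposal is incomplete, and taken as a derivation from the known formula it is correct but coincides with what the paper does, namely quoting \cite{sp}.
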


Now we prove existence of a total perfect dominating set of
hypercubes removal of which gives a regular, connected and
bipancyclic subgraph.

\begin{thm} In an $n$-dimensional hypercube $Q_n$ ($n \geq 4$) there
exists a total perfect dominating set $D$ of $Q_n$ for $ 2^k - 1 <
n < 2^{k+1} -1$ and $k \geq 2$ with $|D|=2^{(n-k)}$ such that $Q_n
- D$ is $(n-1)-$regular, $(n-1)-$connected and
bipancyclic.\end{thm}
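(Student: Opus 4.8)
The plan is to realise $Q_n-D$ as a Cartesian product of a Hamming shell with a smaller cube, so that each of the three required properties falls out of one of the three lemmas above. Write $m=2^k-1$ and $p=n-m$; the hypotheses $2^k-1<n<2^{k+1}-1$ force $1\le p\le m$, and we decompose $Q_n=Q_m\times Q_p$. Let $D'$ be a linear perfect independent dominating set (a Hamming code) of $Q_m$, so $|D'|=2^{m-k}$; the shell $Q_m-D'$ is Hamiltonian \cite{gs} and, by Theorem \ref{tm3}, $(m-1)$-connected. I would then define
\[
D=\{(x,y): x\in D',\ y\in V(Q_p)\}=D'\times V(Q_p),
\]
so that $|D|=|D'|\cdot 2^{p}=2^{m-k+p}=2^{n-k}$, and observe that the induced subgraph $Q_n-D$ is exactly $(Q_m-D')\times Q_p$.

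First I would verify that $D$ is a total perfect dominating set. A vertex $(x,y)$ with $x\notin D'$ has, by the perfect-code property of $D'$, exactly one $Q_m$-neighbour lying in $D'$ and no $Q_p$-neighbour in $D$, hence exactly one neighbour in $D$; thus $D$ is a perfect dominating set. A vertex $(x,y)$ with $x\in D'$ has all of its $p\ge 1$ $Q_p$-neighbours in $D$, so every vertex of $Q_n$ has a neighbour in $D$ and $D$ is total as well. Equivalently, $H=(Q_m-D')\times Q_p$ is an induced subgraph of $Q_n$ in which every vertex has degree $(m-1)+p=n-1$, so Lemma \ref{l1} identifies $V(Q_n)-V(H)=D$ as a perfect dominating set and shows at once that $Q_n-D=H$ is $(n-1)$-regular.

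For connectivity I would apply Lemma \ref{l3} to the connected factors $Q_m-D'$ and $Q_p$: since $\kappa(Q_m-D')\ge m-1$ by Theorem \ref{tm3} and $\kappa(Q_p)=p$, we obtain $\kappa(Q_n-D)=\kappa\bigl((Q_m-D')\times Q_p\bigr)\ge (m-1)+p=n-1$, while the reverse inequality is immediate from $(n-1)$-regularity, so $Q_n-D$ is exactly $(n-1)$-connected. For bipancyclicity I would use that a Hamiltonian path of $Q_m-D'$ (obtained by deleting one edge of its Hamiltonian cycle) together with a Hamiltonian path of $Q_p$ spans a mesh $P_a\times P_b$ inside $(Q_m-D')\times Q_p$, with $a=2^m-2^{m-k}$ and $b=2^p$. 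Here $a,b\ge 2$ and $ab=2^n-2^{n-k}=|V(Q_n-D)|$, so Lemma \ref{l2} supplies a cycle of every even length $l$ with $4\le l\le ab$; that is, $Q_n-D$ is bipancyclic.

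The routine parts are the cardinality count and the degree and neighbour bookkeeping; the step carrying the real content is the bipancyclicity, where the point is to recognise that the product of two Hamiltonian paths is a \emph{spanning} mesh whose order equals $|V(Q_n-D)|$, so that Lemma \ref{l2} already reaches the full even range up to the number of surviving vertices and no further argument is needed. I would double-check the base case $k=2$ (where $m=3$, $D'$ consists of two antipodal vertices, and $Q_m-D'$ is a $6$-cycle) to confirm that the inputs $a,b\ge 2$, the Hamiltonicity of the shell, and the connectivity bound all remain valid there.
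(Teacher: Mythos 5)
Your proposal is correct and follows essentially the same route as the paper: the same decomposition $Q_n = Q_m \times Q_{n-m}$ with $m = 2^k-1$, the same set $D = D' \times V(Q_{n-m})$ for a Hamming code $D'$ of $Q_m$, Lemma \ref{l1} for perfect domination and regularity, Lemma \ref{l3} (Spacapan) for $(n-1)$-connectivity, and the spanning mesh $P_{2^m - 2^{m-k}} \times P_{2^{n-m}}$ with Lemma \ref{l2} for bipancyclicity. Your write-up is in fact slightly more careful than the paper's, since you make explicit the Hamiltonian-path construction of the mesh, the verification that $D$ is total, and the base case $k=2$, all of which the paper leaves implicit.
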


\begin{proof}For $n \geq 4$ consider $k \geq 2$ such that $ 2^k - 1 < n
< 2^{k+1} -1$. Now we write $Q_n = Q_m \times Q_{n-m}$ where $m =
2^k - 1$. For any $t \in V(Q_{n-m})$ we denote by $(Q_{m}, {t})$
 the subgraph of $Q_n$ induced by the vertices whose last $(n-m)$ components form the tuple $t$.

By Theorem \ref{tm2}, for every $D$ which is any coset of any linear
perfect independent dominating set of $Q_m$, $H = Q_m - D$ is
$(m-1)$-regular, $(m-1)$-connected and, in addition, it is
Hamiltonian for $k = 2$, and
edge-bipancyclic for $k \geq 3$.\\
Claim : The subgraph $G =  H \times Q_{n-m}$ of $Q_n$ is $(n-1)$-regular, $(n-1)$-connected and bipancyclic.\\
For $s \in V(H)$ and $t \in V(Q_{n-m})$ we denote by $(s,t)$ the
vertex in $Q_n$ whose first $m$ components form the tuple $s$ and
whose last $(n-m)$ components form the tuple $t$. Thus we have
$V(G)= V(H)\times V(Q_{n-m}) = \{(s,t): s \in V(H), t \in
V(Q_{n-m})\}$. 

Firstly, we prove that $G$ is $(n-1)-$regular.\\
As degree of $s$ that is $d(s) = m-1$ in $H$ and $d(t) = n-m$ in
$Q_{n-m}$,
we have $d(s,t) = (m-1) + (n-m)  = n-1$ in $G$, for every vertex in $G$.

Now, we will prove that $G$ is $(n-1)-$connected.\\
 We know that $\kappa(G) \leq \delta(G) = n-1$.
Also by Lemma \ref{l3}, $\kappa(G)= \kappa(H \times Q_{n-m}) \geq \kappa(H) + \kappa(Q_{n-m}) = (m-1) + (n-m)$ gives us $\kappa(G)= n-1$.

Lastly, we will prove that $G$ is bipancyclic for $k \geq 3$.\\
 As $G = H \times Q_{n-m}$ contains the mesh $P_{m{(2^{m-k})}} \times P_{2^{n-m}}$ as a spanning subgraph,
  bipancyclicity of $G$ follows by the Lemma \ref{l2}.\\
  For $t \in
 V(Q_{n-m})$, let us denote by $D' = \bigcup (D, t)$ where $V(D, t) = \{(s,t): s \in V(D), t \in
V(Q_{n-m})\}$. We note that $G = Q_n - D'$.

By Lemma \ref{l1}, $D'$ is a perfect dominating set of $Q_n$ and in
addition, $D'$ is total by definition. Thus $D'$ is a total
perfect dominating set of $Q_n$ removal of which gives an induced
subgraph $G$ which is $(n-1)$-regular, $(n-1)$-connected and
bipancyclic.\\

This completes the proof.
\end{proof}

 \noindent \textbf{Concluding remarks}\\
Thus we proved that for any coset of any linear perfect
independent dominating set $D$ of $Q_n$, $Q_n - D$ is
edge-bipancyclic. Whether the same property is true for any
perfect
independent dominating set of $Q_n$?~\\

 \noindent {\bf Acknowledgment:} The author gratefully
 acknowledges the Department of Science and Technology, New Delhi, India
 for the award of Women Scientist Scheme (SR/WOS-A/PM-79/2016) for research in Basic/Applied Sciences.

{\centerline{************}}

% ------------------------------------------------------------------------
%GATHER{xBib.bib}   % For Gather Purpose Only
%GATHER{Thesis.bbl} % For Gather Purpose Only
%\setlinespacing{1.44}
\bibliographystyle{amsplain}
%\bibliography{xbib}

\begin{thebibliography}{10}








\bibitem{bd}  J. Borges, I. J. Dejter, On perfect dominating sets in hypercubes and their complements, {\it J. Combin. Math.
Combin. Comput.}, $20 (1996), 161-173$.






\bibitem{d1}  I. J. Dejter, On symmetric subgraphs of the $7$-cube: an overview, {\it Discrete Math.}, $124 (1994), 55-66$.

\bibitem{d2}  I. J. Dejter, Symmetry of factors of the $7$-cube Hamming shell, {\it J. Combin. Des.}, $5 (1997), 301-309$.

\bibitem{d3}  I. J. Dejter, Equitable factorizations of Hamming shells, {\it Discrete Math.}, $261 (2003), 177-187$.



\bibitem{ddgz}  W. Duckworth, P. E. Dunne, A. M. Gibbons, M. Zito, Leafy spanning Trees in Hypercubes, {\it Appl.
Math. Lett.}, $14 (2001), 801-804$.



\bibitem{gs}  Petr Gregor, Riste Skrekovski, Long
cycles in hypercubes with distant faulty vertices, {\it Discrete
Math. Theoret. Comput. Science}, $11(1) (2009), 187-200$.


\bibitem{h}  D. G. Hoffman, D. A. Leonard, C. C. Lindner, K. T.
Phelps, C. A. Rodger, J. R. Wall, Coding Theory, The Essentials,
{\it M. Dekker Inc, New York}, $1991$.

\bibitem{li} San Ling, Chaoping Xing,  Coding Theory
A First Course, {\it First Edition, Cambridge University Press, New York},
$2004$.







\bibitem{mw}  S. A. Mane, B. N. Waphare,  Regular connected bipancyclic spanning subgraphs of hypercubes, {\it Comput. Math. Appl.},
$62(2011), 3551-3554$.



\bibitem{sp}   S. Spacapan, Connectivity of cartesian products of graphs, {\it Appl. Math. Lett.}, $21 (2008), 682-685$.






\bibitem{w}  P. M. Weichsel, Dominating sets of $n$-cubes, {\it J. Graph Theory}, $18 (1994), 479-488$.

\bibitem{we}  Douglas. B. West,  Introduction to Graph
theory, {\it Second Edition, Prentice-Hall of India, New Delhi},
$2002$.

\bibitem{xdx}  J. M. Xu, Z. Z. Du, M. Xu, Edge fault tolerant edge bipancyclicity of hypercubes, {\it Inform.
Process. Lett.}, $96 (2005), 146-150$.~\\


{\centerline{************}}






\end{thebibliography}

\end{document}